\newtheorem{theorem}{Theorem}[section]
\newtheorem{lemma}[theorem]{Lemma}
\newtheorem{proposition}[theorem]{Proposition}
\newtheorem{corollary}[theorem]{Corollary}
\theoremstyle{definition}
\newtheorem{definition}[theorem]{Definition}
\newtheorem{example}[theorem]{Example}
\newtheorem{remark}[theorem]{Remark}
\numberwithin{equation}{section}
\def\dist{\text{\rm dist\,}}
\def\llll{\longrightarrow}
\newcommand{\C}{{\mathbb {C}}}
\newcommand{\N}{{\mathbb {N}}}
\newcommand{\R}{{\mathbb {R}}}
\newcommand{\T}{{\mathbb {T}}}
\newcommand{\Rea}{\text{\rm Re}\, }
\def\com#1{{``#1''}}
\def\sep{{ \ \  }}
\def\sem{{\ \ \ \  }}
\def\seg{{\ \ \ \  \ \  }}
\title[Stability results] {Stability results of properties related to the  \\ Bishop-Phelps-Bollob{\'a}s  property  for operators 
	%under absolute norms
}
\author[M.D. Acosta]{Mar\'{\i}a D. Acosta}
\address{Universidad de Granada, Facultad de Ciencias,
	Departamento de An\'{a}lisis Matem\'{a}tico, 18071 Granada, Spain}
\email{dacosta@ugr.es}
\author[M. Soleimani]{Maryam Soleimani-Mourchehkhorti}
\address{School of Mathematics, Institute for Research in Fundamental Sciences (IPM), P.O. Box: 19395-5746, Tehran, Iran}
\email{m-soleimani85@ipm.ir}
\thanks{The  first  author was  supported  by Junta de Andaluc\'{\i}a grant  FQM--185  and also by Spanish MINECO/FEDER grant  MTM2015-65020-P. The second author was   supported by a grant from IPM}
\begin{document}
   {\large

\begin{abstract}
	We prove that the class of Banach spaces $Y$ such that the pair $(\ell_1, Y)$ has the Bishop-Phelps-Bollobás property for operators is stable  under finite products when the norm of the product is given by an absolute norm. We also provide examples showing that previous stability results obtained for that property are optimal.
\end{abstract}

\maketitle

   \section{Introduction}
This paper is motivated by recent research on extensions of the so-called Bishop-Phelps-Bollobás  theorem  for operators.  Bishop-Phelps theorem  \cite{BP} states that every continuous linear functional on a Banach space can be approximated (in norm) by norm attaining functionals.   Before to state precisely a
\com{quantitative version} of that result proved by Bollobás \cite{Bol}  we recall some notation.  We denote  by  $B_X$, $S_X$ and $X^*$   the closed unit ball,  the unit sphere
and the topological dual of a Banach space $X$, respectively.  If $X$ and $Y$ are  both real or  both complex Banach spaces, $L(X,Y)$ denotes the space of (bounded linear) operators from $X$ to $Y$, endowed with its usual operator norm.

\vskip3mm

{\it Bishop-Phelps-Bollob{\'a}s Theorem} (see \cite[Theorem 16.1]{BoDu}, or \cite[Corollary 2.4]{CKMMR}). Let $X$ be a Banach space and $0< \varepsilon < 1$. Given $x \in B_X$  and $x^* \in S_{X^*}$ with $\vert 1- x^* (x) \vert < \frac{\varepsilon ^2 }{2}$, there are elements $y \in S_X$ and $y^* \in S_{X^*}$  such that $y^* (y)=1$, $\Vert y-x \Vert < \varepsilon$ and $\Vert y^* - x^* \Vert < \varepsilon $.

A lot of attention has been  devoted to extending Bishop-Phelps theorem  to operators and interesting results have been  obtained about that topic (see for instance \cite{Lin} and  \cite{Bou}). In \cite{Acs}  the reader may find most of the results on the topic known until 2006 and  some open questions on the subject.  The survey paper
\cite{Mars} contains  updated results for  Bishop-Phelps  property  for the space of compact operators.  It deserves to point out that  in general the subset of norm attaining compact operators between two Banach spaces  is not dense in the  corresponding  space of compact operators \cite[Theorem 8]{Mar-j}.

In 2008  the study of extensions of
%\linebreak[4]
Bishop-Phelps-Bollob{\'a}s theorem to operators was initiated by Acosta, Aron, Garc{\'i}a and Maestre \cite{AAGM}.  In order to state some of these extensions it will be convenient to  recall  the following notion.

\begin{definition}
	[{\cite[Definition 1.1]{AAGM}}]
	 \label{def-BPBp}
	   Let $X$  and $Y$ be either real or complex Banach spaces. The pair $(X,Y )$ is said to have the Bishop-Phelps-Bollob{\'a}s property for operators (BPBp) if for every $  0 < \varepsilon  < 1 $  there exists $ 0< \eta (\varepsilon) < \varepsilon $ such that for every $T\in S_{L(X,Y)}$, if $x_0 \in S_X$ satisfies $ \Vert T (x_0) \Vert > 1 - \eta (\varepsilon)$, then
	there exist an element $u_0 \in S_X$  and an operator $S \in S_{L(X,Y )}$ satisfying the following conditions
	$$
	\Vert S (u_0) \Vert =1, \sem \Vert u_0- x_0 \Vert < \varepsilon \seg \text{and}
	\sem \Vert S-T \Vert < \varepsilon.
	$$
\end{definition}

In the paper already mentioned it is shown  that the pair $(X,Y)$ has the BPBp whenever $X$ and $Y$ are finite-dimensional spaces \cite[Proposition 2.4]{AAGM}.    The same result also holds true   in case that $Y$ has a certain isometric property
(called property $\beta $ of Lindenstrauss), for  every Banach space $X$
\cite[Theorem 2.2]{AAGM}.  For instance, the spaces $c_0$ and $\ell_\infty$ have such geometric  property.  It is known that  every Banach space  admits an equivalent norm  with the property  $\beta$.  In case that  the domain is $\ell_1$   there is a characterization of the Banach spaces $Y$ such that $(\ell_1,Y)$  has the BPBp \cite[Theorem 4.1]{AAGM}. The geometric property appearing in the previous characterization was called the almost hyperplane series property (in short AHSp) (see Definition \ref{def-AHSP}).

In general  there are  a few  results  about stability of the BPBp under direct sums both on the domain or on the range. For instance, it was shown in \cite[Proposition 2.4]{ACKLM} that  the pairs $\bigl(X, \bigl(\oplus \sum
_{n=1}^\infty Y_n\bigr)_{c_0}\bigr)$ and $\bigl(X, \bigl( \oplus \sum _{n=1}^\infty
Y_n\bigr) _ {\ell_\infty}\bigr)$ satisfy the Bishop-Phelps-Bollob{\'a}s property for
operators whenever all pairs $(X,Y_n)$ have the Bishop-Phelps-Bollob{\'a}s property for
operators \com{uniformly}.  On the other hand,   on the range the BPBp is not stable under
$\ell_p$-sums  for $1 \le p < \infty $ (see \cite[Theorem, p. 149]{Go} and \cite[Theorem 2.3]{Ac1}).
%Indeed  this stability is not satisfied %even in the case that $Y_n $ is the  all %the Banach spaces coincide with the scalar %field.
%  for every even in the case that $Y_n = \R$ %for every $n$for every $1\le p < \infty$ %there is a  the subset
% of norm attaining operators from any Banach %space $X$ into $\ell_p$ ($1 \le p <
% \infty$) is not dense in the space of %operators from $X$ into $\ell_p$
Indeed it is a long-standing open question if for
every  Banach space $X$, the subset of norm attaining operators from $X$ into
the euclidean space $\R^2$ is dense in the corresponding space of operators.

In case that  the domain is $ \ell_1$,   there are some more known results for the stability of the class of Banach spaces $Y$ such that $(\ell_1,Y)$ has the BPBp. In view of the characterization already mentioned, we will list some   known results of stability  of the  AHSp.

As a consequence  of   \cite[Theorem 4.1]{AAGM} and \cite[Proposition 2.4]{ACKLM}, if the family of Banach spaces  $\{ Y_n : n \in \N \}$  has AHSp \com{uniformly}, then  the  spaces $(\bigoplus_{n=1}^\infty Y_n )_{c_0}$ and $(\bigoplus_{n=1}^\infty Y_n )_{\ell_\infty}$  have AHSp. Also it was proved  the stability  of  AHSp under finite $\ell_p$-sums for every $1\leq p < \infty$ \cite[Theorems 2.3 and 2.6]{AAGP}.  Recently this result was extended   to any absolute sum of two summands  (see Definition \ref{def-sum-n}) \cite[Theorem 2.6]{AMS}.  The paper \cite{AMS} also contains some  stability result for $\bigl( \sum _{n=1}^ \infty Y_n  \bigr) _E$, where $E$ is a   Banach sequence space satisfying  certain  additional assumptions  \cite[Theorem 2.10]{AMS}.

The goal of this paper is to obtain some more stability results.
Now we briefly describe the content of the paper. In section $2$  we  recall the  definition of  absolute norm on $\mathbb R ^N$,   the  class of norms induced on a finite  product
of normed spaces by  absolute norms and  some  properties that will be used later.
We also  provide an example showing that, in general, an absolute  norm on $\R^3$ cannot be written   in terms of two absolute norms on $\R^2$ (see Example \ref{ex-ab-norm} for details).

Later in section $3$, we prove that  AHSp is stable  under products  of any finite number of Banach spaces with the same property,  when the product is endowed with  an absolute norm. Notice  that the proof of this  general result  is far from the one for the case of the product of two spaces. We will provide  more detailed arguments in section 3 for that assertion. Let us just mention now  that a simple induction argument does not work in view of Example \ref{ex-ab-norm}.
It is worth to notice that
in general the product of two spaces with AHSp does not necessarily has such property.

In section 4 we show  the parallel stability result for AHp (see Definition \ref{def-AHP}).   Let us mention that
AHp is a property stronger than AHSp.  Finally we provide a simple  example showing that AHSp is not preserved in general by an infinite product  in case that the norm is given by a Banach lattice sequence,
even in the case that all the factors have AHp uniformly. This example shows that the stability result  proved in \cite[Theorem 2.10]{AMS} is  optimal.

\vskip10mm

\section{Definitions and notation}

	In this section we recall the notions of absolute norm on $\R^N$, the norm endowed by an absolute norm on a finite product of normed spaces and  some main properties that we will use later.  We  also recall the notion of approximate hyperplane series property that will be essential in this paper.

	The notion of an absolute norm  for $\C^2$  was introduced  in \cite[\S 21]{BoDu}, where  the reader can find some  properties of these norms.  In different contexts this class of norms has been used   in order to study geometric properties of the direct sum of Banach spaces (see for instance \cite{Pat},  \cite{MPRq} and   \cite{SK}). Although  we will use properties of absolute norms that are  well known we recall the notion that we  use  and state  properties useful to our purpose.
	
	The following notion  is a particular case of the  one  used in  \cite[Section 2]{LMM}. It suffices for our purpose.
	
	%We will follow the terminology used by  %\cite[Section 2]{LMM} and \cite[Section %3]{Har} but only for finite subsets.

	\begin{definition}
		A norm $f$ on  $ \R^N$  is called \textit{absolute} if it satisfies that
		$$
		f \bigl( (x_i) \bigr) = f   \bigl( ( \vert x_i \vert )  \bigr) , \sem \forall  (x_i) \in \R^N .
		$$
		An absolute norm $f$ is said to be \textit{normalized}   if $f(e_i)=1$ for every $1\le i \le N $, where $\{ e_i: 1 \le i \le N \}$ is the canonical basis of $\R ^N$. 
	\end{definition}

	Clearly the usual  norms on $\R^N$ are  absolute norms.
	The following statement  gathers some properties of  absolute norms.  Proofs  can be found for instance in \cite[Remark 2.1]{LMM}.  Since  we  consider finite dimensional spaces    next assertions can be also  checked  by using a similar argument to the one used in \cite[Lemmas 21.1 and 21.2]{BoDu}

	\begin{proposition}
		\label{pro-ab-norm}
		Let  $f$  be  an  absolute normalized norm on $\R^ N$. The following assertions hold
		\begin{itemize}
			\item[a)]   If $ x,y \in  \R ^ N $  and $\vert x_i \vert \le \vert y_i \vert$ for each $i \le N $   then  $f ( x )   \le  f ( y ) $.
			\item[b)]    It is satisfied that
			$$
			\Vert x\Vert_\infty  \le  f   (x )  \le
			\Vert x \Vert_1, \sep \forall x  \in \R^N.
			$$
			\item[c)]   If $ x,  y \in \R ^ N $  and $\vert x_i \vert < \vert y_i \vert$ for each $i \le N $   then  $f  (x) < f  (y)  $.
		\end{itemize}
	\end{proposition}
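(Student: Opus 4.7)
My plan is to prove the three assertions in the natural order: (a) monotonicity first, (b) as an easy consequence of (a) and the normalization $f(e_i)=1$, and (c) via a scaling argument on top of (a). The conceptual core is entirely in (a); the other two parts should fall out quickly.

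For part (a), I would first use absoluteness to reduce to the case where $x$ and $y$ have non-negative coordinates, so that $0 \le x_i \le y_i$ for each $i$. The key observation is that if I set $\epsi_i := x_i/y_i \in [0,1]$ whenever $y_i \ne 0$ (and $\epsi_i := 0$ otherwise, a case that causes no trouble), then the vector $x = (\epsi_1 y_1,\dots,\epsi_N y_N)$ can be written as a convex combination of the $2^N$ sign-flipped vectors $(\sigma_1 y_1,\dots,\sigma_N y_N)$ with $\sigma \in \{-1,+1\}^N$. Explicitly, with weights $\lambda_\sigma := \prod_{i=1}^N \tfrac{1+\sigma_i \epsi_i}{2}$ one checks that $\sum_\sigma \lambda_\sigma = 1$ and $\sum_\sigma \lambda_\sigma \sigma_i y_i = \epsi_i y_i = x_i$. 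Since $f$ is absolute, $f\bigl((\sigma_i y_i)\bigr) = f\bigl((\vert y_i\vert)\bigr) = f(y)$ for every sign pattern $\sigma$, and convexity of $f$ then gives $f(x) \le \sum_\sigma \lambda_\sigma f(y) = f(y)$.

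For part (b), the upper bound $f(x) \le \Vert x\Vert_1$ is immediate from the triangle inequality applied to $x = \sum_{i=1}^N x_i e_i$ together with $f(e_i)=1$. For the lower bound, I would apply (a) to the pair $(\vert x_j\vert e_j, \,x)$ to obtain $\vert x_j\vert = f(\vert x_j\vert e_j) \le f(x)$ for each $j$, then take the maximum over $j$. Part (c) reduces to (a) by a scaling trick: the case $x=0$ is trivial since then $y\ne 0$ and $f(y)>0$ by (b). Otherwise, using that there are only finitely many coordinates and each ratio $\vert x_i\vert /\vert y_i\vert$ is strictly below $1$ (interpreting $0/\vert y_i\vert = 0$), I can find $\alpha > 1$ with $\alpha\vert x_i\vert \le \vert y_i\vert$ for all $i$; applying (a) then gives $\alpha f(x) = f(\alpha x) \le f(y)$, and since $f(x) > 0$ and $\alpha > 1$ we conclude $f(x) < f(y)$.

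The only genuinely non-routine step is the convex-combination identity in (a); everything else is bookkeeping. I expect the main care is needed in handling coordinates where $y_i = 0$ (forcing $x_i = 0$ as well) so that the weights $\lambda_\sigma$ are well defined and the product expansion really recovers $x_i$, but this is a trivial edge case.
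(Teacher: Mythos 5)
Your proof is correct. The paper does not actually write out a proof of this proposition (it only cites \cite[Remark 2.1]{LMM} and \cite[Lemmas 21.1 and 21.2]{BoDu}), but your argument is essentially the classical one behind those references: monotonicity of an absolute norm via convexity and sign symmetry, with (b) and (c) as routine consequences; the only cosmetic difference is that you express $x$ as a convex combination of all $2^N$ sign-flips $(\sigma_i y_i)$ at once, whereas the standard presentation adjusts one coordinate at a time, and your weights $\lambda_\sigma=\prod_i\frac{1+\sigma_i\epsi_i}{2}$ and the edge case $y_i=0$ are handled correctly.
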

	
	Of course,   the topological dual of $\R^N$ can be identified with   $\R^N$ and the identification is given by the mapping
	$
	\Phi : \R ^N \llll \bigl( \R ^ N \bigr) ^*$ defined by
	$$
	\Phi (y) (x) = \sum _{i =1} ^N  y_i x_i , \sem  \forall y,x \in  \R^N.
	$$
	
	Under this identification, by defining the mapping
	$$
	f^* ( y ) = \max \{ \Phi  ( y )  (x) :  x  \in  \R^N,  f ( x ) \le 1 \},
	$$
	it is immediate that $f^*$ is also an absolute normalized  norm    in case that $f$ is  an absolute normalized  norm on $\R ^N$  and $\Phi$ is a surjective linear  isometry from $(\R^N , f^*)$ to  the dual of the space $(\R^N , f)$.

	 Next concept is standard and has been  used in the literature  very frequently  for the  product  of two spaces
	(see for instance \cite{Behlp},   \cite{MPRis},  \cite{MPRq}, \cite{MPRYjap} and  \cite{HWW})).

	\begin{definition}
		\label{def-sum-n}
	Let   $N$ be  a nonnegative integer,  $X_i$  a Banach space for each $i \le N $ and  $f: \R^N  \llll \R$ be an 	 absolute  norm. Then the mapping  $ \Vert \ \Vert _f : \prod _{ i=1 } ^N  X_i \llll \R$ given by
	$$
	\Vert  x  \Vert _f  = f \bigl( ( \Vert x_i \Vert  ) \bigr), \sem \forall x= (x_i) \in \prod_{ i=1} ^N   X_i
	$$
	is a norm on $ \prod_{ i=1} ^N   X_i $. In what follows, we denote $Z= \prod_{ i=1} ^N   X_i $, endowed with the norm  $\Vert \   \Vert _f $.
	\end{definition}

	The following result  describes the dual and the duality mapping of  the space $Z$, that is essentially well known.  In any case there is a proof in  \cite[Proposition 3.3]{Har}.
	
	\begin{proposition}
		\label{pro-ab-norm-dual}
		Under the previous setting the dual space $Z^*$ can be identified with the space $  \prod_{ i=1} ^N   X_{i}^*  $,  endowed with the absolute norm $f^*$. More precisely, the mapping
		$\psi: \prod_{ i=1} ^N   X_{i}^*  \llll Z^*$  given by
		$$
		\Psi \bigl( (x_{i}^* )\bigr)  \bigl( x_i \bigr) = \sum _{ i =1 }^N  x_{i}^* (x_i), \sem \forall (x_i) \in \prod_{ i=1} ^N   X_i , \  (x_i^*)  \in  \prod_{ i=1} ^N   X_i^*
		$$
		is a surjective linear isometry  from $\prod_{ i=1} ^N  X_{i}^* $  to the topological dual of $Z$,  where we consider in $\prod_{ i=1} ^N   X_{i}^* $ the norm associated to   $f^*$, that is,
		$$
		\Vert  \bigl( x_i^* \bigr)  \Vert _{f^*}  = f^* \bigl( ( \Vert x_i ^*\Vert ) \bigr), \sem \forall (x_i^*)  \in 
		\prod_{ i=1} ^N   X_i^*.
		$$
		Moreover, if $z^*  =\psi \bigl(  (  x_{i}^*) \bigr) \in  S_{ Z^*}$  and $z = (x_i) \in S_Z$, then $z^*(z)=1$ if and only if
		$$
		x_{i}^* (x_i)= \Vert x_{i}^* \Vert \Vert x_i \Vert, \sem \forall i \le N.
		$$
	\end{proposition}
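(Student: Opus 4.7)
The plan is to establish that $\Psi$ is a surjective linear isometry and then derive the characterization of the norm-one pairings. Linearity of $\Psi$ is immediate from its definition; the core of the proof is the norm equality $\Vert\Psi((x_i^*))\Vert = \Vert(x_i^*)\Vert_{f^*}$, and both of its two inequalities rest on the elementary duality
\begin{equation*}
\sum_{i=1}^N a_i b_i \le f((a_i))\, f^*((b_i)) \qquad (a_i, b_i \ge 0),
\end{equation*}
which follows directly from the definition of $f^*$ after using absoluteness to reduce to non-negative entries.

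For the upper bound on $\Vert\Psi((x_i^*))\Vert$, the triangle inequality combined with the displayed duality gives, for $x = (x_i) \in Z$,
\begin{equation*}
\vert \Psi((x_i^*))(x)\vert \le \sum_{i=1}^N \Vert x_i^*\Vert \Vert x_i\Vert \le f^*((\Vert x_i^*\Vert))\, f((\Vert x_i\Vert)) = \Vert(x_i^*)\Vert_{f^*}\Vert x\Vert_f .
\end{equation*}
For the reverse inequality, given $\varepsilon > 0$ I would use the definition of $f^*$ to pick $(t_i) \in \R^N$ with $t_i \ge 0$, $f((t_i)) \le 1$, and $\sum t_i \Vert x_i^*\Vert > \Vert(x_i^*)\Vert_{f^*} - \varepsilon$, and then for each $i$ choose $u_i \in S_{X_i}$ with $x_i^*(u_i) > \Vert x_i^*\Vert - \varepsilon$ (with $u_i$ arbitrary when $x_i^* = 0$). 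Setting $x := (t_i u_i)$ yields $\Vert x\Vert_f = f((t_i)) \le 1$ and $\Psi((x_i^*))(x) = \sum t_i x_i^*(u_i)$; a routine bound on $\sum t_i$ via Proposition \ref{pro-ab-norm}(b) then allows me to push $\varepsilon \to 0$.

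Surjectivity is a bookkeeping step: given $z^* \in Z^*$, define $x_i^* \in X_i^*$ by $x_i^*(u) := z^*(0,\ldots,u,\ldots,0)$ with $u$ in the $i$-th coordinate. Boundedness follows from $f(e_i) = 1$ making each canonical injection $X_i \hookrightarrow Z$ an isometric embedding, and $\Psi((x_i^*)) = z^*$ is obtained by decomposing $x = \sum_i (0,\ldots,x_i,\ldots,0)$ and using linearity of $z^*$. Finally, for the characterization of $z^*(z) = 1$ when $\Vert z\Vert_f = \Vert z^*\Vert = 1$, the estimate
\begin{equation*}
1 = z^*(z) = \sum x_i^*(x_i) \le \sum \vert x_i^*(x_i)\vert \le \sum \Vert x_i^*\Vert \Vert x_i\Vert \le f^*((\Vert x_i^*\Vert))\, f((\Vert x_i\Vert)) = 1
\end{equation*}
forces equality at every step, in particular giving $x_i^*(x_i) = \Vert x_i^*\Vert \Vert x_i\Vert$ for each $i$; the converse is read back along the same line.

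The only delicate ingredient is the duality inequality on $\R^N$, which relies crucially on absoluteness of both $f$ and $f^*$; everything else is standard direct-sum bookkeeping, so no substantial obstacle is expected beyond keeping careful track of the factor constants when estimating $\sum t_i$ in the lower-bound step.
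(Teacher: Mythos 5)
The paper does not actually prove this proposition: it is presented as ``essentially well known'' with a pointer to \cite[Proposition 3.3]{Har}, so your argument can only be measured against that reference and against correctness. The parts of your sketch establishing that $\Psi$ is a surjective linear isometry are fine and are the standard argument: the H\"older-type inequality $\sum_i a_ib_i\le f((a_i))\,f^*((b_i))$ for nonnegative entries, the test vectors $x=(t_iu_i)$ for the reverse inequality (your bound $\sum_i t_i\le N$ coming from $\Vert t\Vert_\infty\le f(t)\le 1$ does close that step), and the coordinate restrictions $u\mapsto z^*(0,\ldots,u,\ldots,0)$ for surjectivity, using that $f$ is normalized. Likewise the forward implication of the ``moreover'' part is correctly obtained by forcing equality throughout the chain $1=\sum_i x_i^*(x_i)\le\sum_i\vert x_i^*(x_i)\vert\le\sum_i\Vert x_i^*\Vert\,\Vert x_i\Vert\le 1$.

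The gap is your last claim, that ``the converse is read back along the same line.'' It cannot be: from $x_i^*(x_i)=\Vert x_i^*\Vert\,\Vert x_i\Vert$ for all $i$ you only get $z^*(z)=\sum_i\Vert x_i^*\Vert\,\Vert x_i\Vert$, and the final inequality $\sum_i\Vert x_i^*\Vert\,\Vert x_i\Vert\le f^*((\Vert x_i^*\Vert))\,f((\Vert x_i\Vert))=1$ can be strict, so nothing forces $z^*(z)=1$. In fact the biconditional as printed is false: take $N=2$, $X_1=X_2=\R$, $f=\Vert\cdot\Vert_2$, $z^*=\Psi((1,0))$ and $z=(0,1)$; then $x_i^*(x_i)=0=\Vert x_i^*\Vert\,\Vert x_i\Vert$ for $i=1,2$, yet $z^*(z)=0\ne 1$. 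A correct converse needs the additional hypothesis $\sum_i\Vert x_i^*\Vert\,\Vert x_i\Vert=1$, i.e.\ that $(\Vert x_i\Vert)$ and $(\Vert x_i^*\Vert)$ form a dual pair for $f$ and $f^*$. This is a defect of the statement rather than of your overall strategy --- and the paper only ever invokes the forward implication, e.g.\ to obtain \eqref{vi*-vk} in the proof of Theorem \ref{th-stable} --- but a proof that claims to establish the converse by symmetry is asserting something false, so that step must either be supplied with the missing hypothesis or the statement weakened to the implication actually used.
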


	In  what follows by a convex series we mean a series
	$\sum \alpha_n$ of nonnegative real numbers such that      $\sum _{n=1}^\infty \alpha
	_n =1$.
	Now we recall other   notion essential in our paper which is related to the Bishop-Phelps-Bollobás property for operators.

		\begin{definition}[{\cite[Remark 3.2]{AAGM}}]
		\label{def-AHSP}
			A Banach space $X$  has the {\it approximate
			hyperplane series property} (AHSp) if for every $\varepsilon >0$ there exist
		$\gamma_X\left(\varepsilon\right)>0$ and $\eta_X (\varepsilon )>0$ with
		$\lim_{\varepsilon \to 0}\gamma_X (\varepsilon )=0$ such that for every sequence
		$\{ x_n \}$ in $ S_X$ and every convex series $\sum_{n} \alpha_n$ with
		$$
		\biggl \Vert \sum_{k=1}^{\infty}\alpha_kx_k \biggr \Vert > 1-\eta_X (\varepsilon ),
		$$
		there exist a subset $A\subset \N$ and a subset $\{z_k : k \in A\}\subset S_X$    satisfying the following  conditions 
		\begin{enumerate}
			\item[1)]
			$ \ \sum_{k\in A}\alpha_k>1- \gamma _X(\varepsilon), $
			\item[2)]
			$  \Vert z_k-x_k  \Vert  <\varepsilon \ \ \text{\ for \ all } k\in A$  \ \ {\text and}
			\item[3)]                           
			there is $x^\ast \in S_{X^\ast}$ such that $x^\ast(z_k)=1$ for all  $k\in A.$
		\end{enumerate}
	\end{definition}

	Finite-dimensional spaces, uniformly convex spaces, the classical spaces  $C(K)$  ($ K$ is a compact and Hausdorff space) and $L_1 (\mu)$  ($\mu$ is a positive measure) have AHSp  (see \cite[Section 3]{AAGM}).
	
	 It is convenient to  recall         the following characterization of  AHSp.

	\begin{proposition} [{\cite[Proposition 1.2]{AAGP}}]
		\label{pro-char-AHSP}
		Let $X$ be a Banach space. The following conditions are equivalent.
		\begin{itemize}
			\item[a)] $X$ has the AHSp.
			\item[b)] For every $0 < \varepsilon < 1$ there exist $\gamma_X\left(\varepsilon\right)>0$ and $\eta_X
			(\varepsilon )>0$ with $\lim_{\varepsilon \to 0}\gamma_X (\varepsilon )=0$ such that for every sequence $\{
			x_n \}$ in $ B_X$ and every convex series $\sum_{n} \alpha_n$ with $ \displaystyle{\biggl \Vert
				\sum_{k=1}^{\infty}\alpha_kx_k \biggr \Vert > 1-\eta_X (\varepsilon ),} $ there are a subset $A \subseteq \N
			$ with $\sum_{k\in A}\alpha_k
			>1-\gamma_X (\varepsilon )$, an element $x^* \in  S_{X^*}$, and
			$ \{ z_k: k \in A\} \subseteq \bigl ( x^*\bigr)^{-1} (1) \cap B_X$ such that $\Vert z_k-x_k \Vert
			<\varepsilon$ for all $k\in A.$
			\item[c)] For every $0< \varepsilon < 1$ there exists $0 <
			\eta < \varepsilon$ such that for any sequence $\{ x_n \}$ in $ B_X$ and every convex series $\sum_{n}
			\alpha_n$ with $ \displaystyle{\biggl \Vert \sum_{k=1}^{\infty}\alpha_kx_k \biggr \Vert > 1-\eta,}$ there are
			a subset $A \subset \N $ with $\sum_{k\in A}\alpha_k
			>1-\varepsilon $, an element $x^* \in  S_{X^*}$, and
			$ \{ z_k: k \in A\} \subseteq \bigl ( x^*\bigr)^{-1} (1) \cap B_X$ such that $\Vert z_k-x_k \Vert
			<\varepsilon$ for all $k\in A.$
			\item[d)]  The same statement holds as in $(c)$ but for every
			sequence $\{ x_n\}$ in $S_X$.
		\end{itemize}
	\end{proposition}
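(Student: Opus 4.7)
The plan is to establish the cycle $(a)\Rightarrow (b)\Rightarrow (c)\Rightarrow (d)\Rightarrow (a)$. Three of these implications are essentially formal. For $(b)\Rightarrow (c)$ I would reparametrize: given $0<\varepsilon<1$, choose $\varepsilon'<\varepsilon$ small enough that $\gamma_X(\varepsilon')<\varepsilon$ and set $\eta=\min\{\eta_X(\varepsilon'),\varepsilon'\}$; then the conclusion of (b) applied with parameter $\varepsilon'$ is exactly the conclusion of (c). The implication $(c)\Rightarrow (d)$ is direct specialization since $S_X\subset B_X$. Finally, $(d)\Rightarrow (a)$ follows because the vectors $\{z_k\}\subset B_X$ produced by (d) satisfy $x^\ast(z_k)=1$ with $\Vert x^\ast\Vert=1$, which forces $\Vert z_k\Vert=1$; thus $z_k\in S_X$ and the AHSp requirements are met with $\gamma_X(\varepsilon)=\varepsilon$.

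The nontrivial direction is $(a)\Rightarrow (b)$. Given $0<\varepsilon<1$, a sequence $\{x_n\}\subset B_X$, and a convex series $\sum\alpha_n$ with $\bigl\Vert\sum_k\alpha_k x_k\bigr\Vert>1-\eta$ (where $\eta$ is to be selected), I would split the index set by proximity to the unit sphere. Fix an auxiliary $\delta>0$ and set
$$
A_1=\{k\in\N:\Vert x_k\Vert>1-\delta\},\qquad A_2=\N\setminus A_1.
$$
Since $\Vert x_k\Vert\le 1-\delta$ on $A_2$, the triangle inequality gives
$$
1-\eta<\Bigl\Vert\sum_k\alpha_k x_k\Bigr\Vert\le\sum_{k\in A_1}\alpha_k+(1-\delta)\sum_{k\in A_2}\alpha_k=1-\delta\sum_{k\in A_2}\alpha_k,
$$
so $\sum_{k\in A_2}\alpha_k<\eta/\delta$, and whenever $\eta$ is much smaller than $\delta$ the indices in $A_2$ carry negligible mass.

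On $A_1$ normalize $y_k=x_k/\Vert x_k\Vert\in S_X$, so $\Vert y_k-x_k\Vert=1-\Vert x_k\Vert<\delta$. Put $\beta=\sum_{k\in A_1}\alpha_k\in(1-\eta/\delta,1]$ and $\tilde\alpha_k=\alpha_k/\beta$. A further triangle estimate yields
$$
\Bigl\Vert\sum_{k\in A_1}\tilde\alpha_k y_k\Bigr\Vert\ge\frac{1}{\beta}\Bigl\Vert\sum_{k\in A_1}\alpha_k x_k\Bigr\Vert-\delta>1-\frac{\eta+\eta/\delta}{\beta}-\delta,
$$
which, for $\eta$ and $\delta$ suitably small, exceeds $1-\eta_X(\varepsilon/2)$. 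Applying the AHSp hypothesis to $\{y_k\}_{k\in A_1}\subset S_X$ with the convex series $\{\tilde\alpha_k\}$ produces $A\subset A_1$, a functional $x^\ast\in S_{X^\ast}$, and $\{z_k\}_{k\in A}\subset S_X$ with $x^\ast(z_k)=1$, $\Vert z_k-y_k\Vert<\varepsilon/2$, and $\sum_{k\in A}\tilde\alpha_k>1-\gamma_X(\varepsilon/2)$. Unwinding, $\Vert z_k-x_k\Vert<\varepsilon/2+\delta<\varepsilon$, and $\sum_{k\in A}\alpha_k=\beta\sum_{k\in A}\tilde\alpha_k$ is close to $1$.

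The main obstacle is the bookkeeping required to tune $\delta$ and $\eta$ so that the resulting constants in (b) satisfy $\gamma_X^{(b)}(\varepsilon)\to 0$ as $\varepsilon\to 0$. It suffices to take $\delta<\min\{\varepsilon/2,\eta_X(\varepsilon/2)/4\}$ and then $\eta<\delta\cdot\eta_X(\varepsilon/2)/4$; with these choices all error terms combine to produce $\gamma_X^{(b)}(\varepsilon)$ of the order of $\gamma_X(\varepsilon/2)+\eta/\delta$, which indeed tends to zero.
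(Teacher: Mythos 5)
The paper does not prove this proposition; it is imported verbatim from \cite[Proposition 1.2]{AAGP}, so there is no in-paper argument to compare against. Judged on its own, your cyclic proof $(a)\Rightarrow(b)\Rightarrow(c)\Rightarrow(d)\Rightarrow(a)$ is correct, and the one substantive step, $(a)\Rightarrow(b)$, is handled by the standard device: discard the indices with $\Vert x_k\Vert\le 1-\delta$ (which carry mass at most $\eta/\delta$), normalize the rest, renormalize the coefficients, and apply AHSp to the resulting sequence in $S_X$; all your estimates check out, and the observation in $(d)\Rightarrow(a)$ that $x^*(z_k)=1$ with $\Vert x^*\Vert=\Vert z_k\Vert\le 1$ forces $z_k\in S_X$ is exactly what makes the cycle close. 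One small point to make explicit: your final $\gamma_X^{(b)}(\varepsilon)$ is of order $\gamma_X(\varepsilon/2)+\eta/\delta$, and with your choice $\eta<\delta\,\eta_X(\varepsilon/2)/4$ the term $\eta/\delta$ is bounded by $\eta_X(\varepsilon/2)/4$, which the definition of AHSp does not force to tend to $0$ with $\varepsilon$. This is trivially repaired (replace $\eta_X(\varepsilon)$ by $\min\{\eta_X(\varepsilon),\varepsilon\}$, or require in addition $\eta<\delta\varepsilon$), but it should be said. Also, when $A_1$ is restricted you implicitly re-index $\{y_k\}_{k\in A_1}$ and $\{\tilde\alpha_k\}$ as a sequence over $\N$ (padding with zero coefficients if $A_1$ is finite); worth a sentence, but harmless.
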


	Acosta, Masty{\l}o and Soleimani-Mourchehkhorti proved  that the AHSp is stable under  product of two spaces, endowed with an absolute norm \cite[Theorem 2.6]{AMS}.  The argument for extending that result for more summands is not obvious.   Next we provide       an example of an absolute norm on $\R^3$  that cannot be expreseed in terms of two  absolute norms on $\R^2$.  As a consequence, induction cannot be applied directly  to  prove the stability result of AHSp under  absolute norms.

	\begin{example}
		\label{ex-ab-norm}
		Consider the function on $\R^3$ given by 
		$$
		\vert (x,y,z) \vert = \max \bigl\{ \sqrt{x^2 + y^2 } , \vert x \vert + \vert z \vert \bigr \}
		\sem  \bigl(  (x,y,z ) \in \R ^3 \bigr).
		$$	
		Then $\vert \  \; \vert $ is an absolute normalized norm on $\R^3$ and there are no absolute norms $f$ and  $g$ on $\R^2$ satisfying any of the following three assertions
		\begin{enumerate}
			\item[\textbf{i)}]	 $\vert (x,y,z) \vert = f ( g(y,z),x), \sep \forall  (x,y,z) \in \R^3.$
			\item[\textbf{ii)}]	  $\vert (x,y,z) \vert = f ( g(x,z),y), \sep \forall  (x,y,z) \in \R^3.$
			\item[\textbf{iii)}]	  $\vert (x,y,z) \vert = f ( g(x,y),z), \sep \forall  (x,y,z) \in \R^3.$	
		\end{enumerate}
	\end{example}
	\begin{proof}
		It is immediate to check that $\vert \ \; \vert$ is an  absolute normalized norm on $\R^3$.
		
		\noindent
		\textbf{i)} Assume that  it is satisfied the equality
		$$
		\vert (x,y,z) \vert = f ( g(y,z),x), \sem \forall  (x,y,z) \in \R^3.
		$$
		Since $\vert e_2 \vert = \vert e_3 \vert =1$ we have that
		$$
		1= f(g(1,0),0) = g(1,0) f(1,0) , \sem  1= f(g(0,1),0) = g(0,1) f(1,0)
		$$
		and so
		$$
		g(1,0)= g(0,1).
		$$
		As a consequence we obtain that
		$$
		\sqrt{2}= \vert ( 1,1,0) \vert = f(g(1,0),1) =  f(g(0,1),1) =  \vert ( 1,0,1) \vert =2,
		$$
		which is a contradiction. So condition i) cannot be satisfied.
		
		\noindent
		\textbf{ii)} Assume now that   it is satisfied
		$$
		\vert (x,y,z) \vert = f ( g(x,z),y), \sem \forall  (x,y,z) \in \R^3.
		$$
		So
		\begin{equation}
		\label{g-ii}
		\vert x \vert + \vert z \vert  = \vert (x,0,z)\vert  = f(g(x,z),0) = g(x,z) f(1,0), \sem
		\forall (x,z) \in \R ^2.
		\end{equation}
		 Hence we obtain that
		$$
		\sqrt{x^2 + y^2}= \vert (x,y,0) \vert =  f (  g(x,0),y)= f\Bigl( \frac{x}{ f(1,0)}, y\Bigr), \sem
		\forall (x,y) \in \R ^2 .
		$$
		That is,
		$$
		f(x,y) = \sqrt{ (f(1,0) x ) ^2 + y^2 }, \sem \forall (x,y)\in \R^2.
		$$
		As a consequence, in view of the previous equality and \eqref{g-ii} we deduce that
		$$
		f(g(x,z),y) ) =   \sqrt{ (f(1,0) g(x,z) ) ^2 + y^2 } =  \sqrt{ (\vert x \vert + \vert z \vert ) ^2 + y^2 }, \sem \forall (x,y,z)\in \R^3.
		$$
		But the last equality contradicts the assumption of ii).

		\noindent
		\textbf{iii)} Assume now that
		$$
		\vert (x,y,z) \vert = f(g(x,y),z), \sem \forall (x,y,z) \in \R^3.
		$$
		Hence we get that
		\begin{equation}
		\label{g-x-y-sqrt}
		\sqrt{x^2 + y^2 }  = f(g(x,y),0) = g(x,y) f(1,0), \sem \forall (x,y) \in \R^2.
			\end{equation}
			As a consequence  we have that
		$$
		\vert x \vert +  \vert z \vert = \vert (x,0,z) \vert = f (g(x,0),z )=  f  \Bigl(  \frac{ x  } { f(1,0)}, z \Bigr) , \sem  \forall (x,z) \in \R^2,
		$$
		that is,
		\begin{equation}
		\label{f-iii}
		f(x,z)= f(1,0) \vert x \vert + \vert z \vert, \sem \forall (x,z) \in \R^2.
		\end{equation}
		 For each $(x,y,z) \in \R^3,$  in  view of \eqref{f-iii} and \eqref{g-x-y-sqrt}   we obtain that
		\begin{align*}
		\max \bigl\{ \sqrt{ x^2 + y^2}, \vert x \vert + \vert z \vert \bigr\} & =  f(g(x,y),z) \\
		& =  f(1,0) g(x,y) + \vert z \vert \sem \forall (x,y,z)\in \R^3 \\
		& =  \sqrt{ x^2 + y^2} + \vert z \vert,
		\end{align*}
		which is a contradiction. So $\vert \  \; \vert$ cannot satisfy condition iii).
	\end{proof}

\section{ Stability result of the approximate hyperplane series property}

As we already mentioned in the introduction, the goal  of this section is to prove
that  the AHSp is stable under finite products in case that the norm  of the product
is given by an absolute norm. For product of two spaces that result was proved in \cite[Theorem 2.6]{AMS}.

In the proof of the stability of AHSp for the product of two spaces Lemma 2.5 in  \cite{AMS} plays an essential role. But the statement of that result does not hold in case that  we replace $\R$ by  $\R^2$. For instance,  this is the case of the absolute norm on $\R^3$ whose closed unit ball  is the convex hull of the set given by
$$
\{ (x,y,0): x^2 + y^2 \le 1\} \cup
\{ (x,0,z): x^2 + z^2 \le 1\}
$$
$$
 \cup
\{ (0,y,z): y^2 + z^2 \le 1\} \cup
\Bigl\{ \frac{1}{ \sqrt{2}} (r,s,t): r,s,t \in \{1,-1\} \Bigr\}.
$$

 The following  result is a consequence of \cite[Lemma 3.3]{AAGM}.

\begin{lemma}
	\label{elemental}
	Let   $\{z_k\}$ be a sequence  of complex numbers with $\vert z_k \vert \le 1$  for any nonnegative integer  $k,$ and
	let $  0 < \eta < 1$ and   $\sum  \alpha _k$ be   a convex series   such that  \ $ \Rea \sum_{ k=1}^{\infty}
	\alpha _k z_k > 1 - \eta^2   $. If we define $A:= \{
	k \in {\mathbb{N}}: \Rea z_k > 1-\eta \}$ then 
	$$
	\sum _{ k \in A} \alpha _k   >  1 - \eta.
	$$
\end{lemma}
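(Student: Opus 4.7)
The plan is to split the convex series according to the complement of $A$ and use the trivial upper bound $\Rea z_k \le |z_k| \le 1$ on $A$ and the defining inequality $\Rea z_k \le 1-\eta$ on $\mathbb{N}\setminus A$. Since everything is already packaged as a one-line estimate, I expect no real obstacle; the only subtlety is that the hypothesis should really be written $\Rea z_k \le |z_k| \le 1$, so the reference to \cite[Lemma 3.3]{AAGM} is essentially just bookkeeping.

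Concretely, I would set $B = \mathbb{N}\setminus A = \{k \in \N : \Rea z_k \le 1-\eta\}$ and write
\[
1-\eta^2 \; < \; \sum_{k=1}^{\infty} \alpha_k \Rea z_k \; = \; \sum_{k\in A}\alpha_k \Rea z_k \; + \; \sum_{k\in B}\alpha_k \Rea z_k.
\]
On $A$ I would bound $\Rea z_k \le 1$, on $B$ I would bound $\Rea z_k \le 1-\eta$, and then use $\sum_{k\in B}\alpha_k = 1 - \sum_{k\in A}\alpha_k$ (which is where the assumption $\sum_k \alpha_k = 1$ enters). This rearranges to
\[
1-\eta^2 \; < \; \sum_{k\in A}\alpha_k + (1-\eta)\Bigl(1-\sum_{k\in A}\alpha_k\Bigr) \; = \; (1-\eta) + \eta\sum_{k\in A}\alpha_k.
\]

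Subtracting $1-\eta$ from both sides gives $\eta(1-\eta) < \eta \sum_{k\in A}\alpha_k$, and dividing by $\eta>0$ yields the desired inequality $\sum_{k\in A}\alpha_k > 1-\eta$. The argument works verbatim over $\C$ because $|z_k|\le 1$ implies $\Rea z_k \le 1$, so no reduction to the real case is needed; the complex hypothesis is essentially cosmetic.
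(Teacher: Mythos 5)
Your proof is correct and is exactly the standard computation behind this statement: the paper itself gives no proof, merely citing \cite[Lemma 3.3]{AAGM}, and the splitting over $A$ and its complement followed by the bounds $\Rea z_k\le 1$ and $\Rea z_k\le 1-\eta$ is precisely how that cited lemma is established. No gaps; the algebra $1-\eta^2<(1-\eta)+\eta\sum_{k\in A}\alpha_k$ and the division by $\eta>0$ are all valid.
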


The next statement  is a refinement of \cite[Lemma 3.4]{AAGM}  that will be very useful.

\begin{lemma}
	\label{le-AHSP-strong}
	Assume that $\vert \  \; \vert$ is a norm on $\R^N$. Then for every $\varepsilon > 0$, there is $ \delta > 0$ such that
	whenever $ a^\ast \in S_{(\R^N) ^\ast}$,  there exists $\ b^\ast \in S_{(\R^N) ^\ast} $ satisfying  $ 	\text{\rm dist} (a, F( b^\ast)
	)< \varepsilon$ for all $ a \in \{ z \in S_{\R^ N}:  \  a^\ast (z) > 1 -
	\delta \} $,  where $F(b^\ast):= \{ y\in S_{\R^ N}: b^\ast (y)=1\} $ and also $b^* (e_i) =0$ for every $i \le N$ such that $a^* (e_i) =0$.
\end{lemma}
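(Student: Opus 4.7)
The plan is to argue by contradiction, exploiting the compactness of bounded sets in $\R^N$ together with the finiteness of the power set of $\{1,\dots,N\}$.

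Suppose the conclusion fails. Then there exist $\varepsilon>0$, a sequence $\delta_n\downarrow 0$, and functionals $a_n^{\ast}\in S_{(\R^N)^{\ast}}$ such that for every $b^{\ast}\in S_{(\R^N)^{\ast}}$ with $b^{\ast}(e_i)=0$ whenever $a_n^{\ast}(e_i)=0$ one can find $a_n(b^{\ast})\in S_{\R^N}$ satisfying $a_n^{\ast}(a_n(b^{\ast}))>1-\delta_n$ and $\dist(a_n(b^{\ast}),F(b^{\ast}))\ge\varepsilon$.

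By compactness of the unit sphere in the finite-dimensional space $(\R^N)^{\ast}$ we may pass to a subsequence (unrelabeled) along which $a_n^{\ast}\to a^{\ast}\in S_{(\R^N)^{\ast}}$. Since $\{1,\dots,N\}$ has only finitely many subsets, a further subsequence makes the zero-pattern $I_n:=\{i\le N:a_n^{\ast}(e_i)=0\}$ constant, say $I_n=I$. Continuity of evaluation at each $e_i$ then yields $a^{\ast}(e_i)=0$ for every $i\in I$, so $b^{\ast}:=a^{\ast}$ is a legitimate test functional against each $a_n^{\ast}$ in the subsequence. Feeding this choice into the failure hypothesis produces, for every $n$, a point $a_n\in S_{\R^N}$ with $a_n^{\ast}(a_n)>1-\delta_n$ and $\dist(a_n,F(a^{\ast}))\ge\varepsilon$.

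Extract one more convergent subsequence $a_n\to a\in S_{\R^N}$; combining $a_n^{\ast}\to a^{\ast}$ with $\|a_n\|=1$ gives $a_n^{\ast}(a_n)\to a^{\ast}(a)$, hence $a^{\ast}(a)\ge 1$, and the trivial upper bound $\|a^{\ast}\|\,\|a\|=1$ forces $a\in F(a^{\ast})$. Then $\dist(a_n,F(a^{\ast}))\le\|a_n-a\|\to 0$, contradicting $\dist(a_n,F(a^{\ast}))\ge\varepsilon$.

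The only point requiring care, and the only place where the refinement over \cite[Lemma 3.4]{AAGM} is visible, is guaranteeing that $b^{\ast}=a^{\ast}$ is an \emph{admissible} choice, that is, that its zeroes on the canonical basis cover those of each $a_n^{\ast}$; this is exactly what the pigeonhole stabilisation of the zero-pattern $I_n$ provides, after which the remainder of the argument reduces to the standard finite-dimensional compactness reasoning.
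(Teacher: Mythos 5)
Your proof is correct and follows essentially the same route as the paper's: a contradiction argument using compactness of the unit sphere of $(\R^N)^*$, taking the limit functional $a^*$ as the test $b^*$, and then extracting a convergent subsequence of the slice points to contradict $\dist(a_n,F(a^*))\ge\varepsilon$. The only cosmetic difference is that you make the pigeonhole stabilisation of the zero-pattern explicit, whereas the paper builds it in from the start by fixing a subset $G$ and working inside the compact set $Z_G=\{z^*\in S_{(\R^N)^*}:z^*(e_i)=0,\ \forall i\in G\}$.
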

\begin{proof}
	For a subset $G\subset \{ k \in \N: k \le N\}$ we define
	$$
	Z_G := \{ z^* \in  S_{(\R^N) ^\ast} : z^* (e_i)=0, \forall i \in G \}.
	$$
	It is clear that $Z_G$ is a compact set of $(\R^N)^*$.
	
	We argue by contradiction.  So assume that there is a set  $G\subset \{ k \in \N: k \le N\}$,  some positive real number 	$\varepsilon_0$   such that  for each  $\delta > 0$ there is  $a^{*}_\delta   \in Z_G$  such that for each  $b^*  \in Z_G$ there is  some element $a \in \{ z \in S_{\R ^N}:  a^{*}_\delta  (z) > 1 - \delta \}$ such that $\dist (a, F( b^{*})) \ge \varepsilon_0 $.
	
	So there are  sequences  $(r_n) \to 1$,  $ (a_{n}^\ast) \subset
	Z_G $  such that for all $b^\ast \in Z_G, \{ a \in S_{\R ^N} : a_{n}^\ast (a)
	> r_n\} \cap \{a \in  S_{\R ^N}:  \dist ( a, F(b^\ast)) \ge \varepsilon _0 \} \ne \varnothing. \
	$
	By compactness of  $Z_G$, we may assume  that $( a_{n }^\ast ) \to a^\ast$ for
	some $a^\ast \in Z_G$.  By the previous condition there is a sequence $(a_n) $ in $   S_{\R ^N}$  satyisfying  $ r_n <   a_{n}^\ast (a_n) \le 1 $ for each  $n$ and such 		that 
	\begin{equation}
	\label{dist-grande}
	\text{\rm dist} (a_n, F(a^\ast)) \ge \varepsilon_0 , \sem \forall n \in \N
	\ .
	\end{equation}
	By passing to a subsequence, if needed, we also may  assume that $(a_n)$ converges to some $a \in  S_{\R ^N}$.  Since $ ( a_{n}^\ast (a_n)
	) \to 1$ and both sequences are convergent, it follows that $a^\ast (a) =1$; that is, $a \in F(a^\ast ) $.  As a consequence we obtain that $\dist (a_n , F(a^\ast)) \le \Vert
	a_n - a \Vert $ for every $n$. Since $( a_n) $ converges to $ a$, the previous inequality contradicts \eqref{dist-grande}.
\end{proof}

\begin{theorem}
	\label{th-stable}
	Assume that  $\vert \  \  \vert$ is an absolute    normalized norm on $\R^N$ and $\{ X_i: i \le N\}$ are Banach spaces having the AHSp, then $Z= \prod_{i=1}^N X_i$ has the AHSp, where $Z$ is endowed with the norm given by
	$$
	\Vert (x_1, \ldots, x_N ) \Vert = \vert ( \Vert x_1\Vert, \ldots, \Vert x_N \Vert ) \vert, \seg (x_i \in X_i, \forall i \le N).
	$$
\end{theorem}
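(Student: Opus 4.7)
The plan is to establish the AHSp of $Z$ through characterization~(d) of Proposition~\ref{pro-char-AHSP}, generalizing the two-factor argument of~\cite[Theorem 2.6]{AMS}. Set $f=|\cdot|$ and let $f^*$ denote the dual absolute norm on $\R^N$ as in Proposition~\ref{pro-ab-norm-dual}. The three ingredients will be Hahn--Banach in $Z^*$, the refined finite-dimensional BPB-type Lemma~\ref{le-AHSP-strong} applied to $(\R^N,f)$, and the AHSp of each factor $X_i$. Fix $0<\varepsilon<1$, let $\{z^{(k)}\}\subset S_Z$ and let $\sum\alpha_k$ be a convex series with $\|\sum_k\alpha_k z^{(k)}\|>1-\eta$, $\eta=\eta(\varepsilon)>0$ to be tuned at the end. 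Writing $z^{(k)}=(x_i^{(k)})_{i\le N}$ and $b_i^{(k)}=\|x_i^{(k)}\|$, we have $b^{(k)}:=(b_i^{(k)})\in S_{(\R^N,f)}$. By Proposition~\ref{pro-ab-norm-dual}, finding $z^*\in S_{Z^*}$ and $\tilde z^{(k)}\in S_Z$ with $z^*(\tilde z^{(k)})=1$ reduces to finding nonnegative $a\in S_{(\R^N,f^*)}$ and $\tilde b^{(k)}\in S_{(\R^N,f)}$ with $\sum_i a_i\tilde b_i^{(k)}=1$, together with norming pairs $(x_i^{**},y_i^{(k)})\in S_{X_i^*}\times S_{X_i}$ in each factor; then $z^*=\psi((a_i x_i^{**}))$ and $\tilde z^{(k)}=(\tilde b_i^{(k)}y_i^{(k)})$ will satisfy the required identities.

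\emph{Dualize and localize.} Pick $z^{*(0)}=\psi((x_i^{*(0)}))\in S_{Z^*}$ via Hahn--Banach with $z^{*(0)}(\sum_k\alpha_kz^{(k)})>1-\eta$ and set $a_i^0=\|x_i^{*(0)}\|$, so that $(a_i^0)\in S_{(\R^N,f^*)}$. Lemma~\ref{elemental} applied to the scalars $\{z^{*(0)}(z^{(k)})\}\subset[-1,1]$ yields $A_1\subset\N$ with $\sum_{A_1}\alpha_k>1-\sqrt\eta$ and $z^{*(0)}(z^{(k)})>1-\sqrt\eta$ on $A_1$. Combined with $x_i^{*(0)}(x_i^{(k)})\le a_i^0b_i^{(k)}$ and $\sum_i a_i^0b_i^{(k)}\le 1$, this gives on $A_1$ both the total bound $\sum_i a_i^0 b_i^{(k)}>1-\sqrt\eta$ and the coordinatewise defects $a_i^0b_i^{(k)}-x_i^{*(0)}(x_i^{(k)})<\sqrt\eta$ for every $i$.

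\emph{Straighten via the finite-dimensional BPB and apply the factor AHSp.} Apply Lemma~\ref{le-AHSP-strong} to $(\R^N,f)$ with a parameter $\varepsilon'$ (to be tuned), forcing $\sqrt\eta$ below the associated $\delta$: this yields $a\in S_{(\R^N,f^*)}$ with $a_i=0$ whenever $a_i^0=0$, and for each $k\in A_1$ an element $\tilde b^{(k)}\in F(a)$ with $|\tilde b^{(k)}-b^{(k)}|<\varepsilon'$. The absoluteness of $f$ and $f^*$ permits us to take $a_i\ge 0$ and $\tilde b_i^{(k)}\ge 0$, giving $\sum_i a_i\tilde b_i^{(k)}=1$. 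Let $G=\{i\le N:a_i>0\}$ and $\tilde\beta_i=\sum_{A_1}\alpha_k\tilde b_i^{(k)}$. For each $i\in G$ with $a_i^0\tilde\beta_i\ge\mu$ for a threshold $\mu=\mu(\varepsilon)$, the Step~1 defect bounds together with $|\tilde b^{(k)}-b^{(k)}|<\varepsilon'$ show that the convex combination of the normalized vectors $x_i^{(k)}/b_i^{(k)}\in S_{X_i}$ (arbitrary when $b_i^{(k)}=0$) with weights $\alpha_k\tilde b_i^{(k)}/\tilde\beta_i$ has norm at least $1-\sqrt\eta/\mu-O(\varepsilon')$, hence close to $1$; the AHSp of $X_i$ then supplies $x_i^{**}\in S_{X_i^*}$, a subset $A_i\subset A_1$ with large $\alpha$-mass, and $y_i^{(k)}\in S_{X_i}$ close to $x_i^{(k)}/b_i^{(k)}$ with $x_i^{**}(y_i^{(k)})=1$. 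For the remaining $i$ choose an arbitrary norming pair $(x_i^{**},y_i)\in S_{X_i^*}\times S_{X_i}$ (such pairs exist by Bishop--Phelps, since $X_i$ has AHSp).

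\emph{Assemble and estimate the error.} Let $A\subset\bigcap_i A_i$ (intersection over the ``good'' $i$'s), further shrunk to exclude $\{k\in A_1:\tilde b_i^{(k)}>\tau\text{ for some bad }i\}$; Markov's inequality applied to $\sum_i a_i^0\tilde\beta_i>1-O(\sqrt\eta)$ together with the bound on bad coordinates keeps $\sum_A\alpha_k>1-\gamma(\varepsilon)$. Take $z^*=\psi((a_i x_i^{**}))$ and $\tilde z^{(k)}=(\tilde b_i^{(k)}y_i^{(k)})_i$. By Proposition~\ref{pro-ab-norm-dual}, $\|z^*\|=\|\tilde z^{(k)}\|=1$ and $z^*(\tilde z^{(k)})=\sum_i a_i\tilde b_i^{(k)}=1$; and Proposition~\ref{pro-ab-norm}(a) together with the coordinatewise control of $\|\tilde b_i^{(k)}y_i^{(k)}-x_i^{(k)}\|$ yields $\|\tilde z^{(k)}-z^{(k)}\|<\varepsilon$ after tuning $\eta,\varepsilon',\mu,\tau$. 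The main technical obstacle is the bookkeeping around small-weight coordinates ($a_i^0\tilde\beta_i<\mu$): the support clause of Lemma~\ref{le-AHSP-strong} removes the truly degenerate ones ($a_i^0=0$) from $G$, but the rest must be absorbed via arbitrary norming pairs plus a Markov argument on the $\tilde b_i^{(k)}$. This delicate coordination across all $N$ factors is precisely the step that resists a naive induction on $N$ (cf.\ Example~\ref{ex-ab-norm}), and is the reason the $N$-factor result requires a separate proof rather than a direct iteration of the two-factor case.
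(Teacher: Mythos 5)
Your overall architecture (Hahn--Banach in $Z^*$, Lemma~\ref{le-AHSP-strong} applied to $(\R^N,\vert\;\vert)$ to straighten the norm vectors, then the AHSp of each factor applied coordinatewise, reassembled via Proposition~\ref{pro-ab-norm-dual}) matches the paper's Case~2, i.e.\ the case where every component $\Vert x_i^{*(0)}\Vert$ of the Hahn--Banach functional is bounded below. But you attempt a single direct argument with no induction, and the bookkeeping for the degenerate coordinates --- precisely the part you defer to ``tuning $\eta,\varepsilon',\mu,\tau$'' --- is where the proof breaks. Two concrete problems. First, for $i\notin G$ (where $a_i=0$) you take an arbitrary norming pair $(x_i^{**},y_i)$ and then must excise $\{k:\tilde b_i^{(k)}>\tau\}$; but this set can carry all the mass. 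Take $Z=X_1\oplus_\infty X_2$, every $z^{(k)}$ with both coordinates of norm one, and the Hahn--Banach functional supported on the first coordinate: then $a_2=0$, $\tilde b_2^{(k)}\approx 1$ for every $k$, and your exclusion leaves $A=\varnothing$. Since $a_i=0$ means $z^*$ does not see coordinate $i$, the correct move (the one the paper makes) is to set $\tilde z_i^{(k)}=\tilde b_i^{(k)}\,x_i^{(k)}/\Vert x_i^{(k)}\Vert$, preserving the direction, with error $\vert\tilde b_i^{(k)}-b_i^{(k)}\vert<\varepsilon'$ and no exclusion at all. Second, and more seriously, a coordinate can have $a_i>0$ while $a_i^0=\Vert x_i^{*(0)}\Vert$ is tiny (Lemma~\ref{le-AHSP-strong} only kills coordinates where $a_i^0$ is exactly zero). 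For such $i$ you must still produce $y_i^{(k)}\in S_{X_i}$ \emph{exactly} normed by a single $x_i^{**}$ and close to $x_i^{(k)}/b_i^{(k)}$ whenever $\tilde b_i^{(k)}>\tau$, but the only information available, $a_i^0 b_i^{(k)}-\Rea x_i^{*(0)}(x_i^{(k)})<\sqrt\eta$, degenerates as $a_i^0\to 0$, and your Markov bound $\sum_{\tilde b_i^{(k)}>\tau}\alpha_k\le\tilde\beta_i/\tau$ is useless when $\tilde\beta_i$ is not small. You neither rule this configuration out nor handle it.

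This is exactly the obstruction the paper's proof is built around: it runs an induction on $N$, splits off $F=\{i:\Vert u_i^*\Vert>\sqrt[8]{\eta'}\}$, and when $\vert F\vert<N$ it applies the inductive hypothesis to the \emph{single} space $\prod_{i\in F}X_i$, obtaining corrected points that are \emph{exactly} normed by a new functional $v^*$; the coordinates where $v_i^*=0$ are then harmless because they are invisible to the final functional. That induction (your missing ``Case 1'') is doing real work and is not cosmetic. A further, more minor, gap: even for the good coordinates, the set $A_i$ produced by the AHSp of $X_i$ has large mass only with respect to the reweighted series $\alpha_k\tilde b_i^{(k)}/\tilde\beta_i$, and converting this to an $\alpha$-mass bound requires first restricting to $C_i=\{k:\Vert x_i^{(k)}\Vert>\tau\}$ and putting $A=\bigcap_i\bigl(D_i\cup(A_1\setminus C_i)\bigr)$, as in the paper's \eqref{sum-alpha-D_i}--\eqref{sum-alpha-E}; your sketch applies the factor AHSp over all of $A_1$ and is silent on this conversion. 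The proposal is a reasonable outline of the non-degenerate half of the argument, but as written it does not prove the theorem.
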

\begin{proof}
	For a set $G \subset \{ k \in \N : k \le N\}$  we define $P_G: Z \llll Z$ by 
	$$
	P_G (z)(i)= z_i \sep \text{if} \sep i \in G \sem \text{and}  \sem  P_G (z)(i)= 0 \sep \text{if} \sep i \in \{1,2, \ldots, N\}  \backslash G.
	$$
	For each $i \le N$ we denote by $Q_i(z)= z_i$ for every $z \in Z$. 
	
	We can clearly assume that $X_i \ne \{0\}$ for each $i \le N$. We  will prove the result by induction on $N$. For $N=1$ the result is trivially satisfied. So we assume that $ N \ge 2$ and  the result is true for the space  $\prod _{i\in G} X_i$ for any subset $G \subset 
	\{ k \in \N : k \le N\}$  such that $\vert G \vert \le N-1$.  We will prove the result for $G= \{ k \in \N : k \le N\}$.   To this end we use that in view of \cite[Proposition 3.5]{AAGM} finite-dimensional spaces have AHSp. 
	
	Assume that $ 0 < \varepsilon < 1$ and let $\eta :]0,1[ \llll  ]0,1[ $ be a function such that 
	\newline
	{\bf a)} the pair $(\varepsilon, \eta(\varepsilon) )$ satisfies  condition c) in Proposition \ref{pro-char-AHSP} for the space  $(\R^N, \vert \  \vert )$ and for the Banach spaces $\prod _{i \in G} X_i$ for each $G \subset \{ k \in \N : k \le N\}$  such that $\vert G \vert \le N-1$,
	\newline
	{\bf b)} the pair $(\varepsilon, \eta(\varepsilon) )$ satisfies Lemma \ref{le-AHSP-strong} for $\delta = \eta (\varepsilon)$ and 
	\newline
	{\bf c)} $\eta (\varepsilon) < \varepsilon$ for every $ \varepsilon \in ]0,1[$.

	We  will show that $Z$ satisfies condition d) in Proposition \ref{pro-char-AHSP} for $ \eta ^\prime  = \biggl( \dfrac{ \eta \bigl( \eta \bigl( \frac{\varepsilon}{4N} \bigr)\bigr)}{ 2N}\biggr) ^8$. Assume that $(u_k)$ is a sequence in $S_Z$ and $\sum \alpha _k$  is a convex series such that
	$$
	\biggl \Vert \sum _{k=1}^\infty \alpha _k u_k \biggr \Vert > 1 - \eta ^\prime .
	$$
	By Hahn-Banach theorem there is a functional $u^* =(u_{1}^*,u_{2}^*, \ldots, u_{N}^*  )\in S_{Z^*}$ such that 
	\begin{equation}
	\label{norma-big}
	1 - \eta ^\prime  < \Rea u^* \biggl( \sum _{k=1}^\infty \alpha _k u_k \biggr ) =
	\Rea  \biggl( \sum_{k=1}^\infty  \alpha _k \Bigl(  \sum _{i=1}^N   u_{i}^* (u_k(i))  \Bigr)\biggr ). 
	\end{equation}

	Now we define the set   $ F \subset \{ k \in \N : k \le N\}$  by 
	$$
	F= \Bigl\{ i\le N : \Vert u_{i}^* \Vert > \sqrt[8]{\eta ^\prime } \Bigr \} \sem \text{and} \sem F^c = \{ i \in \N: i \le N \} \backslash  F.
	$$
	Since $u^* \in  S_{Z^*}$, in view of Proposition \ref{pro-ab-norm-dual}  and assertion b) in Proposition \ref{pro-ab-norm} we obtain that  $F \ne \varnothing$.   We consider two cases.
	
	{\bf Case 1.} Assume that $\vert F \vert < N$.
	\newline
	Notice that
	\begin{align*}
	1 - \eta ^\prime	& <  \Rea \sum _{k=1}^\infty \alpha _k  u^* \bigl( u_k \bigr ) \\
	& = \Rea \sum _{k=1}^\infty \alpha _k   \Bigl( \sum _{i \in F} u_{i}^* (u_k(i)) \Bigr)  +  \Rea \sum _{k=1}^\infty \alpha _k   \biggl( \sum _{  i \in F^c} u_{i}^* \bigl(u_k(i) \bigr) \biggr)	\\
	& \le  \Rea \sum _{k=1}^\infty \alpha _k   \Bigl( \sum _{i \in F} u_{i}^* (u_k(i) ) \Bigr)  +  \sum _{k=1}^\infty \alpha _k  \biggl( \sum _{  i \in F^c}
	\sqrt[8]{\eta ^\prime } \biggr) \\
	& \le  \Rea \sum _{k=1}^\infty \alpha _k   \Bigl( \sum _{i \in F} u_{i}^* (u_k(i)) ) \Bigr)  +   N \sqrt[8]{\eta ^\prime} \\
	& \le  \Rea \sum _{k=1}^\infty \alpha _k   \Bigl( \sum _{i \in F} u_{i}^* (u_k(i) ) \Bigr)  +   \frac{ \eta \bigl( \eta \bigl( \frac{\varepsilon}{4N}\bigr)\bigr)}{2}. 
	\end{align*}
	So  
	\begin{equation}
	\label{re-F-big}
	\Rea \sum _{k=1}^\infty \alpha _k   \Bigl( \sum _{i \in F} u_{i}^* (u_k(i) ) \Bigr) > 1- \eta ^\prime -   \frac{ \eta \bigl( \eta \bigl( \frac{\varepsilon}{4N}\bigr)\bigr)}{2} > 1 - 
	\eta \Bigl( \eta \Bigl( \frac{\varepsilon}{4N}\Bigr)\Bigr). 
	\end{equation}
	
	By assumption the space $\prod _{i \in F} X_i$ has AHSp,  and in view of a)   there is a set $A \subset \N$ and $v^* = \bigl( v_{i}^*  \bigr) _{i \in F}  \in S_{ ( \prod _{i\in F} X_i )^*}$ such that
	\begin{equation}
	\label{sum-alpha-A}
	\sum_{k \in A} \alpha _k > 1 - \eta \Bigl( \frac{\varepsilon}{4N} \Bigr) > 1- \frac{\varepsilon}{4N} > 1 -\varepsilon
	\end{equation}
	and for every $k \in A$ there is $v_k \in  S_{\prod _{i\in F} X_i }$  such that
	\begin{equation}
	\label{v*-vk}
	v^* (v_k) = \sum _{i \in F} v_{i}^* \bigl( v_k(i) \bigr) = 1, \sem \forall k \in A
	\end{equation}
	and 
	\begin{equation}
	\label{vk-close-QFuk}
	\bigl \Vert v_k - P_F (u_k) \bigr \Vert < \eta \Bigl( \frac{\varepsilon}{4N} \Bigr) < \frac{\varepsilon}{4N} < \frac{\varepsilon}{4}, \sem \forall k \in A.
	\end{equation}
	Now we define $G $ as follows
	$$
	G = \bigl\{ i \in F : \exists k \in A, v_{i}^* (v_k (i)) \ne 0\bigr\}.
	$$
	By \eqref{v*-vk} we have that
	\begin{equation}
	\label{sum-G-y_k}
	\sum _{i \in G } v_{i}^* \bigl( v_k (i))\bigr) = \sum _{i \in F } v_{i}^* \bigl(  v_k(i)\bigr) =  v^* (v_k)=1, \seg \forall k \in A.
	\end{equation}
	As a consequence  $(v_{i}^* ) _{ i \in G } \in S_{ ( \prod _{ i \in G} X_i ) ^*} $.
	In view of  Proposition \ref{pro-ab-norm-dual} we  have that 
	\begin{equation}
	\label{vi*-vk}
	v_{i}^* \bigl( v_k(i)\bigr) = \Vert v_{i}^*  \Vert \Vert v_k (i)  \Vert , \seg \forall i \in F, k \in A.
	\end{equation}
	
	Now we define the element $w^* \in Z^*$ as follows
	$$
	w_{i}^* = \begin{cases}
	v_{i}^* \sep & \text{if} \sep i \in G \\
	0 \sep & \text{if} \sep i \in  \{ j\in \N: j \le N \} \backslash G.
	\end{cases}
	$$ 
	It is trivially satisfied that $w^* \in S_{Z^*}$ and by \eqref{sum-G-y_k}  we have
	\begin{equation}
	\label{w*-yk}
	w^* (v_k) = \sum _{ i \in G } w_{i}^*  \bigl( v_k(i)\bigr) = 
	\sum _{ i \in G } 
	v_{i}^* \bigl( v_k(i)) \bigr)   =1 , \sem \forall  k \in A.
	\end{equation}
	So by \eqref{vk-close-QFuk} for each $k \in A$  we have that
	$$
	\Rea w^* (u_k) =  \Rea w^* \bigl( P_F(u_k) \bigr)  \ge \Rea w^* (v_k) - \bigl \Vert  v_k - P_F (u_k) \bigr \Vert > 1 - \eta \Bigl( \frac{\varepsilon}{4N} \Bigr).
	$$
	That is, for each  $k \in A$ it is  satisfied that
	$$
	1 - \eta \Bigl( \frac{\varepsilon}{4N} \Bigr) < \Rea \sum_{i=1}^N w_{i}^* \bigl( u_k(i)) \bigr) \le   \sum_{i=1}^N \Vert  w_{i}^* \Vert  \Vert  u_k(i) \Vert =  \sum_{i \in G } \Vert  v_{i}^* \Vert  \Vert  u_k(i) \Vert. 
	$$
	By using  condition b)    there exists $s= (s_1, s_2, \ldots, s_N) \in S_{ (\R^ N)^*}$ such that for every $i \in \{ k \in \N: k \le N\} \backslash G$, 
	$s_i=0$ and for every $k \in A$ there exists $r_k = (r_k(i))_{ i \le N} \in S_{\R^N }$ such that
	\begin{equation}
	\label{s-r-1}
	\sum _{k=1}^N s_i r_{k} (i)=1, \sem \Bigl \vert  \bigl( r_{k}(i) \bigr) _{i\le N } - 
	\bigl( \bigl \Vert u_k(i)) \bigr \Vert \bigr) _{i\le N } \Bigr \vert < \frac{\varepsilon}{4N }  < \frac{\varepsilon}{4 }  .
	\end{equation}
	Finally we define $z^* = \bigl( z_{i}^*\bigr)_{i\le N} \in Z^*$ as follows
	$$
	z_{i}^* = \begin{cases}
	s_i \frac{v_{i}^*}{\Vert v_{i}^* \Vert }  \sep & \text{if} \sep i \in G\\
	0 \sep & \text{if} \sep i \in  \{ j \in \N: j \le N \} \backslash G.
	\end{cases}
	$$ 
	By Proposition \ref{pro-ab-norm-dual} we have that $\Vert z^*\Vert = \Vert  \bigl( s_i\bigr)  _{i \le N }  \Vert _{ (\R ^N ) ^*} = 1 $, so $z^* \in S_{Z^*}$.
	
	Notice that for every $i \in G$ there exists $k_0^i \in A$ such that $v_{i} ^* ( v_{k_0^i} (i))\ne 0$. 
	For every $i \in   \{ j \in \N: j \le N \} \backslash G $ we choose $x_{i} \in S_{ X_i}$ and for every $k \in A$ we define $z_{k} \in S_Z$ as follows 
	$$
	z_{k}(i) = \begin{cases}
	r_{k}(i)  \frac{v_k(i) }{\Vert v_k(i)  \Vert }  \sep & \text{if} \sep i \in F \sep \text{and} \sep v_k(i)\ne 0 \\
	r_{k}(i)  \frac{v_{k_0^i}(i) }{\Vert v_{k_0^i}(i)  \Vert }  \sep & \text{if} \sep i \in G  \sep \text{and} \sep v_{k}(i)=0 \\
	r_{k}(i) x_i  \sep & \text{if} \sep i \in F \backslash  G  \sep \text{and} \sep v_k(i)=0 \\
	r_{k}(i)  \frac{ u_k (i) }{\Vert u_k (i)  \Vert }  \sep & \text{if} \sep i \in  \{ j \in \N : j \le N \}  \backslash F  \sep \text{and} \sep u_k (i)\ne 0 \\
	r_{k}(i) x_i  \sep & \text{if} \sep i \in
	  \{ j \in \N : j \le N \}   \backslash F  \sep \text{and} \sep u_k (i)=0.
	\end{cases}
	$$ 
	Since  $\Vert z_{k}\Vert = \bigr\vert \bigl( r_{k} (i) \bigr) _{i\le N} \bigr \vert = 1$ we have that  $z_k \in S_Z$ for every $k \in A$.  By  \eqref{vi*-vk} and \eqref{s-r-1}, taking into account that $s_i=0$ for each $i \in \{ j \le N\} \backslash G$,  it is also satisfied that
	\begin{equation}
	\label{z*-zk}
	z^* (z_k) = \sum_{i \in G } s_i r_{k}(i)=  \sum_{i=1}^N s_i r_{k}(i)=1, \seg \forall k \in A . 
	\end{equation}
	
	Let us fix $k \in A$. 	For $i \in F$ it is clear that 
	\begin{eqnarray}
	\label{zki-uki}
	\bigl\Vert  z_{k} (i) - u_k (i) \bigr\Vert & \le&  \bigl\Vert  z_{k} (i) - v_k (i) \bigr\Vert + \bigl\Vert   v_k (i)- u_k (i)  \bigr\Vert  \notag \\
	&=&  \bigl\vert  r_{k} (i) - \bigl \Vert v_k (i)  \bigr\Vert \;  \bigr \vert + 
	\bigl\Vert   v_k (i)- u_k (i) \bigr\Vert \notag\\
	&\leq&    \bigl\vert  r_{k} (i) - \bigl \Vert u_k (i)  \bigr\Vert \;  \bigr \vert +  2 	\bigl\Vert    v_k (i) - u_k (i)  \bigr\Vert \notag.
	\end{eqnarray}
	As a consequence, by using also \eqref{s-r-1} and \eqref{vk-close-QFuk} we obtain that 
	
	\begin{align}
	\label{QF-zk-QF-uk}
	\nonumber
	\bigl \Vert P_F (z_k) - P_F (u_k) \bigr \Vert 		&  \le   \bigl \vert \bigl( r_{k}(i) \bigr)_{i\le N }  - \bigl ( \bigl \Vert u_k (i) \Vert \bigr) _{i \le N } \bigr \vert + 2 \bigl \Vert P_F (v_k) -  P_F (u_k) \bigr \Vert\\
	&  <  \frac{\varepsilon}{4} + \frac{2\varepsilon}{4} = \frac{3\varepsilon}{4} .
	\end{align}
	
		For $i \in \{j \in \N: j \le N\} \backslash F$ we have that
	$
	\Vert  z_{k} (i)  - u_k (i) \Vert = \bigl\vert  r_{k}(i) - \Vert u_k (i) \Vert \; \bigr \vert $, so in view of \eqref{s-r-1} we obtain that
	\begin{equation}
	\label{QFc-zk-QFc-uk}
	\bigl \Vert P_{F^c} (z_k) - P_{F^c} (u_k) \bigr \Vert 	 \le  \bigl \vert \bigl( r_{k}(i) \bigr)_{i\le N }  - \bigl ( \bigl \Vert u_k (i) \Vert \bigr) _{i \le N }  \bigr \vert<  \frac{\varepsilon}{4} .
	\end{equation}
	From \eqref{QF-zk-QF-uk} and \eqref{QFc-zk-QFc-uk} we conclude that $\Vert z_k - u_k \Vert < \varepsilon $ for every $k \in A$. Since we know that $z^* \in S_{Z^*}$ and by  \eqref{z*-zk} and  \eqref{sum-alpha-A}  the proof is  finished  in case  1.

	{\bf Case 2.} Assume now that $F=\{ i \in  \N : i \le N\}$. We define the set $B$ by
	$$
	B=\Bigl\{ k \in \N : \Rea  u^* (u_k) > 1 -\sqrt{\eta ^\prime} \Bigr\}.
	$$
	In view of \eqref{norma-big} and Lemma \ref{elemental} we obtain  that 
	\begin{equation}
	\label{sum-alpha-B}
	\sum _{k \in B} \alpha _k > 1 - \sqrt{\eta ^\prime}.
	\end{equation}
	In view of Proposition \ref{pro-ab-norm-dual}, for every $k \in B$ we have that
	\begin{align}
	\label{u*-uk}
	\nonumber
	1 - \eta \Bigl( \eta \Bigl( \frac{ \varepsilon}{4N} \Bigr) \Bigr)& < 1 - \sqrt{\eta ^\prime }  \\
	\nonumber
	& < \Rea u^* (u_k)  \\
	& = \Rea \Bigl( \sum _{i=1}^N u_{i}^* \bigl( u_k(i)) \Bigr) \\
	\nonumber
	& \le   \sum _{i=1}^N \bigl \Vert  u_{i}^* \bigr \Vert \;  \bigl \Vert  u_k(i) \bigr \Vert \le 1.
	\end{align}
	By condition b) there is $s=(s_1, s_2, \ldots, s_N ) \in S_{(\R^N) ^*}$ and for every $k \in B$ there is $\bigl( r_{k} (i) \bigr) _{i\le N} \in  S_{\R^N}$  such that
	\begin{equation}
	\label{s-r-1-second}
	\sum _{k=1}^N s_i r_{k} (i)=1, \sem \Bigl \vert  \bigl( r_{k}(i) \bigr) _{i\le N} - 
	\bigl( \bigl \Vert u_k(i)) \bigr \Vert \bigr) _{i\le N} \Bigr \vert < 
	\eta \Bigl( \frac{\varepsilon}{4N} \Bigr) <  \frac{\varepsilon}{4N}, 
	\end{equation}
	where we also used that $\eta $ satisfies condition c). 
	%\newline
	From \eqref{u*-uk} for each $k \in B$ we have
	\begin{equation}
	\label{ui*-uki}
	\Rea u_{i}^* \bigl( u_k(i)) \bigr) \ge \Vert u_{i}^* \Vert \;   \Vert u_k(i) \Vert - \sqrt{\eta ^\prime }, \seg \forall 1 \le i \le N.
	\end{equation}
	
	Now for each $i \le N$ we define the  set $C_i \subset B$ as  follows
	$$
	C_i=\Bigl\{ k \in B:   \bigl\Vert u_k(i)  \bigr \Vert > \sqrt[8]{\eta ^\prime} \Bigr\}.
	$$
	Since  $F =\{ i\in \N : i \le N\}$, for each $i \le N$ we know that $\Vert u_{i}^* \Vert > \sqrt[8]{\eta ^\prime}$.  Hence  for each $i \le N$  
	such that $ C_i \ne \varnothing $ 
	  from  \eqref{ui*-uki} we obtain that 
	$$
	\frac{\Rea u_{i}^* \bigl( u_k(i)) \bigr) }{\bigl \Vert u_{i}^* \bigr \Vert \; \bigl \Vert  u_k(i) \bigr \Vert }  
	\ge  1 - \frac{\sqrt{\eta ^\prime  }}{\sqrt[8]{\eta ^\prime  } \sqrt[8] {\eta ^\prime }}= 1- \sqrt[4]{\eta ^\prime  }  > 1- \eta \Bigl( \eta \Bigl( \frac{\varepsilon}{4N} \Bigr) \Bigr), \sem  \forall k \in C_i.
	$$
	Since $X_i$ has AHSp, by using a)  there is a set $D_i \subset C_i$ such that
	\begin{equation}
	\label{sum-alpha-D_i}
	\sum_{ k \in D_i } \alpha _k \ge \Bigl( 1 - \eta \Bigl( \frac{\varepsilon}{4N} \Bigr) \Bigr) \sum_{ k \in C_i } \alpha _k
	\end{equation}
	and there is  $v_{i}^* \in S_{ (X_i)^*} $ and for every $k \in D_i$ there is  $v_{k}(i) \in S_{ X_i}$ such that
	\begin{equation}
	\label{vi*-vki-second}
	v_{i}^* \bigl( v_k(i) \bigr) =1, \seg \biggl \Vert v_{k}(i) -  \frac{u_k(i) }{\bigl \Vert  u_k(i) \bigr \Vert }  \biggr \Vert  <  \eta \Bigl( \frac{\varepsilon}{4N} \Bigr).
	\end{equation}
	 In case that  $C_i =\varnothing $  for some $i \le  N$  we take $D_i= \varnothing$.	Now we define the set $E \subset B$  by $ E= \bigcap_{i=1}^N  \bigl( D_i \cup ( B \backslash C_i ) \bigr) $.
	Notice that for every $1 \le i \le N$ we have 
	\begin{align*}
	\sum _{ k \in D_i \cup (B\backslash C_i)} \alpha _k 	& = \sum _{ k \in D_i } \alpha _k  + \sum _{ k \in B\backslash C_i} \alpha _k 
	\\
	& \ge  \Bigl( 1 - \eta \Bigl( \frac{\varepsilon}{4N}\Bigr)\Bigr)  \sum _{ k \in C_i } \alpha _k  + \sum _{ k \in B\backslash C_i} \alpha _k  \sem \text{ (by \eqref{sum-alpha-D_i})} \\
	& \ge  \sum _{ k \in B } \alpha _k - \eta \Bigl( \frac{\varepsilon}{4N}\Bigr)    \\	
	& >  1- \sqrt{\eta ^\prime } - \frac{\varepsilon}{4N} \sem \text{(by \eqref{sum-alpha-B} and  condition c))}. 
	\end{align*}
	From the definition of $E$ and the previous chain of inequalities it follows that	
	\begin{equation}
	\label{sum-alpha-E}
	\sum _{ k \in E} \alpha _k 		 \ge 1 - N \sqrt{\eta ^\prime}
	- \frac{\varepsilon}{4} > 1 - \varepsilon.
	\end{equation}
	%DDDDDDDDDDD

	If  $i \le N$ and $C_i \ne \varnothing$ then  $D_i \ne \varnothing$  so  we can choose an element  $k_0^i  \in D_i$. In case that $C_i =\varnothing$  we choose $x_{i} \in S_{X_i}$ and $ x_{i}^* \in S_{ X_{i}^*}$ such that $x_{i}^* (x_i)=1$. For each  $k \in E$ we define $z_k \in S_{Z}$ as follows
	$$
	z_{k}(i) = \begin{cases}
	r_{k}(i) v_{k}(i)  \sep & \text{if} \sep k \in D_i \\
	r_{k}(i) v_{k_0^i}(i)  \sep & \text{if} \sep k \in  B \backslash C_i \sep \text{and} \sep C_i \ne \varnothing \\
	r_{k}(i) x_i  \sep &  \text{if}  \sep k \in  B \backslash C_i \sep \text{and} \sep C_i = \varnothing .
	\end{cases}
	$$ 
	Also we define $z^* \in Z^*$ by 
	$$
	z^*(i) = \begin{cases}
	s_i v_{i}^*  \sep & \text{if} \sep  C_i \ne \varnothing \\
	s_i x_{i}^*  \sep & \text{if} \sep  C_i = \varnothing .
	\end{cases}
	$$ 
	By Proposition \ref{pro-ab-norm-dual}, it is clear that  $z^* \in S_{Z^*}$ since $ 
	\Vert z ^* \Vert = \Vert (s_i) _{ i \le N}  \Vert _{ (\R^N) ^*} =   1.$

	In view of \eqref{vi*-vki-second} and \eqref{s-r-1-second} it is satisfied that
	\begin{equation}
	\label{z*-zk-2}
	z^* (z_k) = \sum  _{i=1}^N  z_{i}^* (z_{k}(i)) =    \sum  _{i=1}^N  s_i  r_{k}(i)=1, \sem \forall k \in E .
	\end{equation}

	%EEEEEEEEEEE

	Let us fix $k \in E $. If $k \in D_i$ we have
	\begin{eqnarray}
	\label{k-Di}
	\bigl\Vert z_k(i)  - u_k(i) \bigr\Vert  &=&  \bigl \Vert r_k(i) v_{k}(i)  -  u_k(i) \bigr \Vert \notag\\
	&=&   \Bigl \Vert r_k(i) v_{k}(i)  -  r_{k}(i)  \frac{u_k(i)}{ \Vert u_k(i) \Vert  } + r_{k}(i) \frac{u_k(i)}{ \Vert u_k(i) \Vert } - u_k(i) \Bigr \Vert   \notag \\
	&\leq&   \bigl \vert r_k(i)  \bigr \vert \Bigl \Vert v_{k}(i)  -    \frac{u_k(i)}{ \Vert u_k(i) \Vert  } \Bigr \Vert  +  \bigl \vert r_{k}(i)  - \bigl \Vert u_k(i)  \bigr \Vert \bigr \vert    %\notag  
	\\
	&\le& \frac{\varepsilon}{4N}  +  \bigl \vert r_{k}(i)  - \bigl \Vert u_k(i)  \bigr \Vert \bigr \vert    \notag  \sem \text{(by \eqref{vi*-vki-second} and  condition c))}.
	\end{eqnarray}

	In case that  $k \in B \backslash C_i$ we obtain that
	\begin{eqnarray}
	\label{k-B-Ci}
	\bigl\Vert z_k(i)  - u_k(i) \bigr\Vert  &\le & 
	\bigl\Vert z_k(i)    \bigr\Vert + \bigl\Vert u_k(i) \bigr\Vert \notag \\
	&=&   \bigl\vert r_k(i)   \bigr\vert + \bigl\Vert   u_k(i) \bigr\Vert \notag \\
	&\leq&    \bigl\vert r_k(i)  - \bigl \Vert  u_k(i)  \bigr \Vert   \bigr\vert + 2\bigl\Vert   u_k(i)  \bigr\Vert %\notag 
	\\
	&\leq&    \bigl\vert r_k(i)  - \bigl \Vert  u_k(i)  \bigr \Vert   \bigr\vert + 2\sqrt[8]{\eta\prime}\notag \\
	&\leq&    \bigl\vert r_k(i)  - \bigl \Vert  u_k(i)  \bigr \Vert   \bigr\vert + \frac{\varepsilon}{4N} \notag.
	\end{eqnarray}
	By \eqref{k-Di} and \eqref{k-B-Ci}  we proved that for every $k \in E$ we have 
	\begin{equation}
	\label{k-E}
	\bigl\Vert   z_k(i) -u_k(i)  \bigr\Vert  \le   \bigl\vert r_k(i)  - \bigl \Vert  u_k(i)  \bigr \Vert   \bigr\vert + \frac{\varepsilon}{4N}.
	\end{equation}
	Taking into account  \eqref{s-r-1-second} for every $k \in E$  we deduce that 
	%As a consequence
	$$
	\Vert z_k - u_k \Vert \le \Bigl \vert \bigl( r_{k}(i) \bigr) _{i\le N} -   \bigl( \Vert  u_k(i)  \Vert  \bigr) _{i\le N}  \Bigr \vert + \sum _{i=1}^N \frac{\varepsilon}{4N} \le \frac{\varepsilon}{4N} + \frac{\varepsilon}{4} \le \frac{\varepsilon}{2} < \varepsilon.  
	$$
Since $z^* \in S_{ Z^*}$,  in  view of \eqref{sum-alpha-E}, \eqref{z*-zk-2} and the previous inequality the proof is also finished   in case 2. 
\end{proof}

 Let us notice  that the converse of Theorem \ref{th-stable} also holds.
	That is, in case that the product space $Z= \prod_{i=1}^N X_i$, endowed with an absolute    normalized norm, has the AHSp, then each space $X_i $  also has the AHSp for $1 \le i \le N$, a result proved in \cite[Theorem 2.3]{Ga}.

\section{ Stability of the approximate hyperplane property  under finite products}

The goal of this section is a result that asserts  the stability of a property stronger than the  approximate hyperplane series property under finite products endowed with an absolute norm.
 We   begin with the following notion that was introduced in
\cite[Definition 2.1]{CKLM}.

\begin{definition}
	\label{def-AHP}
	A Banach space $X$ has the  {\it approximate hyperplane property} (AHp) if there exists a
	function $\delta: ]0,1[ \longrightarrow  ]0,1[ $ and a $1$-norming subset $C$ of $S_{X^*}$
	satisfying the following property.
	
	Given   $\varepsilon> 0$ there is a function $\Upsilon_{X, \varepsilon}: C \llll
	S_{X^*}$ with  the following condition
	$$
	x^* \in C,\  x \in S_X, \ \Rea x^* (x) > 1 - \delta (\varepsilon) \ \Rightarrow \
	\dist(x, F(\Upsilon_{X, \varepsilon} (x^*))) < \varepsilon,
	$$
	where $F(y^*) = \{ y \in S_X : y^* (y)=1\}$ for any $y ^*\in S_{X^*}$.

	A family of Banach spaces $\{ X_i : i \in I\}$   has {\it AHp uniformly}  if every space $X_i$
	has property AHp with the same function $\delta$.
\end{definition}

Clearly we can assume that the $1$-norming subset $C$ in the previous definition satisfies
$\T C \subset C$, where $\T $  is the unit sphere of the scalar field.

Let us notice that a similar property to AHp was implicitly used to prove that several classes of
spaces have AHSp (see \cite{AAGM}).
It is known that property  AHp implies AHSp (see for instance \cite[Proposition 2.2]{CKLM}). It is an open question  whether or not the converse is true.
Examples of spaces having  AHp  are finite-dimensional spaces, uniformly convex spaces, $L_1
(\mu)$ for every measure $\mu$   and also $C(K)$ for every compact Hausdorff topological space
$K$ (see \cite[Propositions 3.5, 3.8, 3.6 and 3.7]{AAGM} and also \cite[Corollary 2.12]{CKLM}).

\begin{remark}
	\label{re-finite-AHP}	
	Let us notice that in view of Lemma \ref{le-AHSP-strong} the space  $\R^N,$ endowed with any norm,  satisfies AHp for the  $1$-norming set $S_{( \R^N )^*}$. Moreover if for some 
	$ 1 \leq i \leq N$  and $a^* \in S_{( \R^N )^*}$,  $a^*(e_i) = 0$, then $(\Upsilon_{\R^N ,\varepsilon} (a^*)) (e_i) = 0$, where $\Upsilon_{\R^N ,\varepsilon}$ is the mapping   appearing in Definition \ref{def-AHP}.
\end{remark}

The following result is a version of \cite[Lemma 2.9]{AMS}. 

\begin{lemma}
	\label{norming}
	Assume that $\vert \  \  \vert$ is an absolute normalized  norm on $\R^N$ and $X_i$  is a Banach space for $1 \le i \le N$.  If for each  $1 \le i \le N$,  $A_i \subset B_{X_i ^*}$  is a   $1$-norming set for $X_i$ such that $\T A_i \subset A_i$,  where $\T$ is the unit sphere of the scalar field,  then the set
	$$
	A= \Bigl\{ \bigl( r_i^*x_i^* \bigr) _{ i \le N} :   (r^*_i)_{ i \le N}  \in {S}_{ (\R^N)^*}, \;   r_ i^* \ge 0, \,  x_i^* \in A_i, \,  \forall 1 \le i \le N \Bigr\}
	$$
	is a $1$-norming set for $Z=  \prod_{i=1}^N  X_i,$  endowed with  the absolute norm  associated to $\vert \  \  \vert$.
\end{lemma}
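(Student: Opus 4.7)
The plan is to verify the two inequalities implicit in ``$1$-norming'': first that $A \subset B_{Z^*}$, and second that for every $z \in Z$ and every $\varepsilon > 0$ there exists $z^* \in A$ with $|z^*(z)| \ge \|z\|_Z - \varepsilon$, so that the supremum of $|z^*(z)|$ over $A$ equals $\|z\|_Z$.

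For the first step, fix an arbitrary $z^* = (r_i^* x_i^*)_{i \le N} \in A$. By Proposition \ref{pro-ab-norm-dual},
\begin{equation*}
\Vert z^* \Vert_{Z^*} = f^*\bigl((r_i^* \Vert x_i^* \Vert)_{i \le N}\bigr).
\end{equation*}
Because $r_i^* \ge 0$ and $\Vert x_i^* \Vert \le 1$ (since $x_i^* \in A_i \subset B_{X_i^*}$), Proposition \ref{pro-ab-norm}(a) applied to the absolute norm $f^*$ gives $\Vert z^* \Vert_{Z^*} \le f^*((r_i^*)_{i\le N}) = 1$, so $z^* \in B_{Z^*}$ as required.

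For the second step, fix $z = (x_i)_{i \le N} \in Z$ and $\varepsilon > 0$. By duality $(\R^N,f)^*$ is isometric to $(\R^N, f^*)$, and since $f^*$ is itself absolute, I can choose a norming functional for $(\Vert x_i \Vert)_{i\le N}$ with non-negative entries: starting from any $(c_i^*) \in S_{(\R^N)^*}$ attaining $f((\Vert x_i\Vert)_{i\le N})$, the vector $(|c_i^*|)$ still has $f^*$-norm $1$ and yields at least the same value on $(\Vert x_i\Vert)_{i\le N}$. Call the resulting non-negative norming functional $(r_i^*)_{i\le N}$, so that $\sum_{i=1}^N r_i^* \Vert x_i \Vert = \Vert z \Vert_Z$. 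Next, for each $i$ with $x_i \ne 0$ I use that $A_i$ is $1$-norming to pick $y_i^* \in A_i$ with $|y_i^*(x_i)| \ge \Vert x_i \Vert - \varepsilon$; then, by the hypothesis $\T A_i \subset A_i$, I replace $y_i^*$ by $\lambda_i y_i^* \in A_i$ for a suitable unimodular scalar $\lambda_i$, obtaining $x_i^* \in A_i$ with $x_i^*(x_i) \ge \Vert x_i \Vert - \varepsilon$. For $i$ with $x_i = 0$ I pick any $x_i^* \in A_i$. Setting $z^* = (r_i^* x_i^*)_{i \le N} \in A$, and using Proposition \ref{pro-ab-norm}(b) applied to $f^*$ to get $r_i^* \le f^*((r_j^*)_{j\le N}) = 1$, I obtain
\begin{equation*}
z^*(z) \;=\; \sum_{i=1}^N r_i^* \, x_i^*(x_i) \;\ge\; \sum_{i=1}^N r_i^* \, \bigl( \Vert x_i \Vert - \varepsilon \bigr) \;\ge\; \Vert z \Vert_Z - N \varepsilon.
\end{equation*}
Combined with $A \subset B_{Z^*}$ from the first step and letting $\varepsilon \to 0$, this yields $\sup_{z^* \in A} |z^*(z)| = \Vert z \Vert_Z$, completing the proof.

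The main obstacle I anticipate is purely bookkeeping: converting a general norming functional in $(\R^N, f^*)$ into one with non-negative coordinates must be justified via the absolute character of $f^*$ rather than any positivity structure on $Z$, and in the complex scalar case the phase adjustment of each $x_i^*(x_i)$ to a non-negative real value must be carried out coordinate-wise using the invariance $\T A_i \subset A_i$. Once these two points are in place, the rest is a routine assembly via Propositions \ref{pro-ab-norm} and \ref{pro-ab-norm-dual}.
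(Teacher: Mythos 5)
Your proof is correct and follows essentially the same route as the paper's: pick, for each coordinate, an almost-norming functional $x_i^*\in A_i$ (rotated via $\T A_i\subset A_i$ so that $x_i^*(x_i)$ is a nonnegative real), combine it with a nonnegative norming functional $(r_i^*)\in S_{(\R^N)^*}$ for $(\Vert x_i\Vert)_{i\le N}$, and estimate $\sum_i r_i^* x_i^*(x_i)$. The only differences are cosmetic: you use an additive error $\Vert x_i\Vert-\varepsilon$ where the paper uses the multiplicative $(1-\varepsilon)\Vert x_i\Vert$, and you spell out the verification that $A\subset B_{Z^*}$ and the reduction to $r_i^*\ge 0$, which the paper leaves implicit.
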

\begin{proof}
	Assume that $(x_i) _{ i \le N} \in S_Z $ and $0 < \varepsilon  < 1$. By assumption for each  $1\leq i \leq N$ there is an element $x_i^* \in A_i $  satisfying that
	\begin{equation}
	\label{x-y-norm}
	\Rea x_i^* (x_i) \ge (1- \varepsilon ) \Vert x_i \Vert  \ge 0 .
	\end{equation}
	By Hahn-Banach theorem there is $(r_i^*) _{ i \le N} \in S_{ (\R ^N ) ^*}$  such that 
	\begin{equation}
	\label{r-x-1}
	\sum _{i=1}^N r_i^* \Vert x_i \Vert  =1.
	\end{equation}
	Clearly we can also assume that $r_i^* \in \R^+_{0}$ for each $1 \le i \le N$.  As a consequence we have that
	\begin{align*}
	\Rea \bigl(r_i^* x_i^*\bigr)_{i \le N}   \bigl(x_i  \bigr) _{i \le N} 	& =  \Rea \sum _{i=1}^N r_i^*   x_i^* (x_i) \\
	& \ge (1- \varepsilon )    \sum _{i=1}^N  r_i^*  \Vert x_i \Vert   \sem \text{(by \eqref{x-y-norm})} \\
	&   = 1 - \varepsilon  \sem \text{(by \eqref{r-x-1})} .
	\end{align*}
\end{proof}

\begin{theorem}
	\label{th-stable-AHP}
	Assume that  $\vert \ \;  \vert$ is an absolute  and  normalized norm on $\R^N$ and $ X_i$  is a Banach space satisfying  the approximate hyperplane property for each $1 \le i \le N$. Then the space
	$Z=  \prod_{i=1}^N  X_i,$  endowed with  the absolute norm  associated to $\vert \ \ \vert$,  also has the approximate hyperplane property.
\end{theorem}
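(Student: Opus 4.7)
The approach adapts the strategy of Theorem \ref{th-stable}, significantly simpler here since AHp involves no convex series, but subject to the additional constraint that the constructed witness $\Upsilon_{Z,\varepsilon}(z^*)$ must depend only on $z^*$, not on a test element $z$.

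I would first apply Lemma \ref{norming} to the $1$-norming sets $A_i \subset S_{X_i^*}$ provided by the AHp of each $X_i$, obtaining a $1$-norming set $A \subset S_{Z^*}$ consisting of elements $z^* = (r_i^* x_i^*)_{i \le N}$ with $(r_i^*) \in S_{(\R^N)^*}$ coordinatewise nonnegative and $x_i^* \in A_i$; clearly $\T A \subset A$. For $\varepsilon > 0$, fix auxiliary parameters $\kappa, \zeta, \eta$ (each a small multiple of $\varepsilon/N$, with $\kappa < 1/N$). Given $z^* \in A$, set $G := \{i \le N : r_i^* > \kappa\}$, which is nonempty because Proposition \ref{pro-ab-norm}(b) applied to the dual absolute norm gives $\max_i r_i^* \ge 1/N$. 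Let $\hat r^* \in S_{(\R^N)^*}$ denote the normalisation of the restriction of $r^*$ to $G$, and set $\hat s := \Upsilon_{\R^N, \eta}(\hat r^*)$ via Remark \ref{re-finite-AHP}; that remark ensures $\hat s_i = 0$ for $i \notin G$, and by absolute normality we may assume $\hat s_i \ge 0$ (replacing by the coordinatewise absolute value preserves $S_{(\R^N)^*}$ and its face structure against nonnegative $\rho$). Setting $y_i^* := \Upsilon_{X_i, \eta}(x_i^*) \in S_{X_i^*}$, define
\[
\Upsilon_{Z,\varepsilon}(z^*) := \bigl(\hat s_i y_i^*\bigr)_{i \le N} \in S_{Z^*}
\]
via Proposition \ref{pro-ab-norm-dual}. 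This depends only on $z^*$ because its $i$-th coordinate vanishes for $i \notin G$.

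For the verification, suppose $z = (z_i) \in S_Z$ satisfies $\Rea z^*(z) > 1 - \delta$ for a threshold $\delta > 0$ to be fixed. Then $\sum_i r_i^* \Vert z_i \Vert > 1 - \delta$ and $r_i^*(\Vert z_i \Vert - \Rea x_i^*(z_i)) \le \delta$ for every $i$. Restricting the first sum to $G$ and renormalising yields $\hat r^*\bigl((\Vert z_i \Vert)_i\bigr) > 1 - \delta - N\kappa$; provided $\delta + N\kappa < \delta_{\R^N}(\eta)$, Remark \ref{re-finite-AHP} furnishes $\rho \in F(\hat s)$ with $\rho_i \ge 0$ (by taking coordinatewise absolute value) and $\vert \rho - (\Vert z_i \Vert)_i \vert < \eta$. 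For each $i \in G$ with $\Vert z_i \Vert \ge \zeta$ we obtain $1 - \Rea x_i^*\bigl(z_i/\Vert z_i \Vert\bigr) \le \delta/(\kappa\zeta)$, and provided $\delta/(\kappa\zeta) < \min_i \delta_{X_i}(\eta)$, the AHp of $X_i$ produces $v_i \in F(y_i^*)$ with $\Vert v_i - z_i/\Vert z_i \Vert \Vert < \eta$. Then I would construct $w = (w_i) \in Z$ coordinatewise, always forcing $\Vert w_i \Vert = \rho_i$: use direction $v_i$ on the good indices; any fixed $v_i^{\mathrm{fix}} \in F(y_i^*)$ on the remaining $i \in G$; and direction $z_i/\Vert z_i \Vert$ (or any unit vector when $\Vert z_i \Vert = 0$) on $i \notin G$, where $\hat s_i = 0$ frees the choice. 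Then $\Vert w \Vert = \vert \rho \vert = 1$, and $\Upsilon_{Z,\varepsilon}(z^*)(w) = \sum_{i \in G} \hat s_i \rho_i = \hat s(\rho) = 1$, so $w \in F(\Upsilon_{Z,\varepsilon}(z^*))$. A coordinatewise case-check bounds $\Vert w_i - z_i \Vert$ by $2\eta$, $2\zeta + \eta$, or $\eta$ in each case, and Proposition \ref{pro-ab-norm}(b) yields $\Vert w - z \Vert \le \sum_i \Vert w_i - z_i \Vert \le N(2\zeta + 2\eta)$, which is $< \varepsilon$ for $\zeta = \eta = \varepsilon/(8N)$. The threshold $\delta$ is then chosen small enough that all invoked inequalities hold.

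The main obstacle is precisely the requirement that $\Upsilon_{Z,\varepsilon}(z^*)$ depend only on $z^*$. A direct application of the AHp of $\R^N$ to $r^*$ could yield $\hat s_i \ne 0$ in a coordinate where $r_i^*$ is tiny; at such an $i$ the constraint $y_i^*(w_i) = \Vert w_i \Vert$ would force $w_i/\Vert w_i \Vert \in F(y_i^*)$, potentially far from $z_i/\Vert z_i \Vert$, with no AHp-leverage available because $\Rea x_i^*(z_i/\Vert z_i \Vert)$ need not be close to $1$. The truncation device that restricts $\hat r^*$ to $G$ is exactly what neutralises this conflict, by ensuring $\hat s_i = 0$ in the troublesome coordinates and leaving the direction of $w_i$ there free to align with $z_i$.
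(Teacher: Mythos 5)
Your proposal is correct and follows essentially the same route as the paper's proof: the $1$-norming set from Lemma \ref{norming}, the truncation of $r^*$ to the set $G$ of coordinates where $r_i^*$ is not too small, the use of Remark \ref{re-finite-AHP} to keep the resulting functional supported on $G$ (so that $\Upsilon_{Z,\varepsilon}(z^*)$ depends only on $z^*$), and the same coordinatewise case division in the verification. The differences are only in bookkeeping of the parameters.
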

\begin{proof}
	Without loss of generality we can assume that  for each $1 \le i \le N$, $X_i \ne \{0\}$ and $X_i$ has the AHp with a $1$-norming set $A_i \subset S_{X_i^*}$ such that  $\T A_i \subset A_i$. By   assumption and Remark \ref{re-finite-AHP}  there is a function $\eta : ]0,1[ \llll ]0,1[$ satisfying the following three conditions:
	\newline
	{\bf i)} the pair $(\varepsilon, \eta(\varepsilon) )$ satisfies the definition of AHp for the Banach space $X_i$ for each  $1 \leq i \leq N$.
	\newline
	{\bf ii)} the  space   $(\R^N, \vert \ \ \vert)$  satisfies the definition of AHp  with the function $\eta$   playing the role of $\delta $ and
	\newline
	{\bf iii)} $\eta (\varepsilon) < \varepsilon$ for each  $ \varepsilon \in ]0,1[$.

	Now we define $\eta ^\prime (\varepsilon)   = \eta  \biggl( \dfrac{ \eta^3 \bigl( \frac{\varepsilon}{4N} \bigr) }{4 N^2} \biggr) $. By Lemma \ref{norming} the set $A$ given by
	$$
	A= \Bigl\{ \bigl( r_i^*x_i^* \bigr) _{ i \le N} :   (r^*_i)_{ i \le N}  \in {S}_{ (\R^N)^*}, \;   r_ i^* \ge 0, \,  x_i^* \in A_i, \,  \forall 1 \le i \le N \Bigr\}
	$$
	is a $1$-norming set for $Z =  \prod _{ i=1}^N X_i$.  We will show that $Z $ has  the AHp with  the set $A$  and the function $\eta ^\prime $.
	
	We take an element  $\bigl(r_i^*x_i^*\bigr)_{ i \le N} \in  Z^* $ satisfying the conditions  in the definition of the set $A$  and $\bigl(x_i\bigr)_{i \le N} \in S_Z$
	such that 
	\begin{equation}
	\label{r-x*-x}
	\Rea \bigl(r_i^*x_i^*\bigr)_{ i \le N}  \bigl(x_i\bigr) _{ i \le N} > 1 - \eta ^\prime (\varepsilon).
	\end{equation}

	Define the set  $G $   by 
	$$
	G = \Bigl \{ i  \le N :  r_i^* >   \dfrac{ \eta \bigl( \frac{\varepsilon}{4N} \bigr)}{2N}  \Bigr\} \sem \text{and write} \sem G^c = \{ i \in \N : i \le N \} \backslash G.
	$$
	So 
	\begin{eqnarray}
	\label{r*-x*-x-eta}
	\sum _{i \in G} r_i^* \|x_i\|   &\geq &   \sum_{i \in G} r_i^* \Rea x_i^*(x_i) \notag\\
	&>&   1 - \eta^\prime (\varepsilon) - \sum_ {i \in G^c} r_i^* \Rea x_i^* (x_i)   \\
	&\geq &  1 - \eta ^\prime (\varepsilon)  - N \dfrac{ \eta \bigl( \frac{\varepsilon}{4N} \bigr)}{2N} \notag \\
	& >& 1 - \eta \Bigl( \frac{\varepsilon}{4N}\Bigr)>0 . \notag
	\end{eqnarray}

	Define $t^*= (t_i^*) _{ i \le N } =  \dfrac{P_G r^*}{ \vert P_G r^*\vert ^*} \in S_{(\R^N)^*}$, where we denoted by 
	$\vert \  \;  \vert ^*$ the  dual norm in $(\R ^N, \vert \  \;  \vert )^*$. In view of \eqref{r*-x*-x-eta}  we have that 
	$$
	\sum_{i \in G} t_i^* \|x_i\| > 1 - \eta \Bigl(\frac{\varepsilon}{4N}\Bigr ) > 0.
	$$
Now we use that  the space $(\R^N, \vert \ \  \vert)$ has the AHp (see condition ii))  and we write  $ \bigl( s_i^*\bigr) _{i \le N} = \Upsilon_{\R^N, \frac{\varepsilon}{4N} } (t^*)$.  In view of Remark   \ref{re-finite-AHP}  we know that  $s_i^* = 0$  if $i  \in G^c $  since  $t_{i}^* =0$  in this case.  
So  we obtain that 
	\begin{equation}
	\label{x-y-F}
	\dist 
	\Bigl( \bigl( \Vert x_i  \Vert  \bigr)_{ i \le N}, F  \bigl( \bigl( s_{i}^* \bigr)_{ i \le N }  \bigr)   \Bigr) 
	< \frac{\varepsilon }{4N}.
	\end{equation}
	So there is $(s_i) _{ i \le N}  \in  F  \bigl( \bigl( s_{i}^* \bigr)_{ i \le N }  \bigr)   \subset  S_{\R^N}$ satisfying 
	\begin{equation}
	\label{r1*s1-r1s1-xy}
	\vert (s_i) _{ i \le N } - (\Vert x_i \Vert )_{ i \le N} \vert < \frac{ \varepsilon}{4N}.
	\end{equation}
	
	Notice that 
	\begin{equation}
	%\label{new}
	\label{s*-s}
	\bigl( s_i^*\bigr)_{ i \le N }  \bigl( s_i\bigr)_{ i \le N }  =\sum_{i \in G} s_i^* s_i = 1.
	\end{equation}

	Now for each $1 \le i \le N$, we define $z_i \in S_{X_i} $ as follows.
	
	\textbf{Case 1.} Assume that  $i \in G $ and $\|x_i\| >  \dfrac{ \eta \bigl( \frac{\varepsilon}{4N} \bigr)}{2N}.$

	From \eqref{r-x*-x}  we obtain that
		\begin{align*}
	 1 - \eta ^\prime (\varepsilon) & <  \Rea \bigl(r_i^* x_i^*\bigr)_{i \le N}   \bigl(x_i  \bigr) _{i \le N} 	 \\
	 & =  \Rea \sum _{i=1}^N r_i^*   x_i^* (x_i) \\
	& \le     \sum _{i=1}^N  r_i^*    \Vert x_{i}^* \Vert \;\Vert x_i \Vert  \\
	&   \le    \sum _{i=1}^N  r_i^*   \Vert x_i \Vert  \\
	& \le 1.
	\end{align*}
	As a consequence we get that 	
	$$
	\Rea r_i^* x_i^* (x_i) > r_i^* \Vert x_i \Vert - \eta ^\prime (\varepsilon),  
	$$
	and so 
	$$
	\Rea x_i^* \Bigl( \frac{x_i}{\Vert x_i \Vert }\Bigr) > 1 - \frac{ \eta ^\prime (\varepsilon) }{ r_i^* \Vert x_i \Vert } > 1 -  \eta \Bigl( \frac{\varepsilon}{4N} \Bigr) .
	$$
	
	Since we  assume that $X_i$ has the AHp with the function $\eta$ and the subset $A_i$ , we conclude that
	$$
	%\label{xFx-yFy}
	%** and ***
	\dist \Bigl( \frac{x_i} {\Vert x_i \Vert} , F \bigl( \Upsilon _{X_i, \frac{\varepsilon}{4N}} (x_i^* ) \bigr)   \Bigr ) < \frac{\varepsilon}{4N} . 
	$$
	So there is $z_i \in   F \bigl( \Upsilon _{X_i, \frac{\varepsilon}{4N}} (x_i^* ) \bigr) $ such that 
	$ \Bigl \Vert z_i - \dfrac{x_i}{\|x_i\|} \Bigr \Vert  < \dfrac{\varepsilon}{4N}$.  As a consequence we have that  
	$ \bigl \Vert\,  \Vert x_i \Vert z_i - x_i \bigr \Vert  < \dfrac{\varepsilon}{4N}$. In view of  \eqref{r1*s1-r1s1-xy}  we deduce that  have 
	\begin{equation}
	\label{si-zi-xi-1}
	\|s_i z_i - x_i\| < \dfrac{\varepsilon}{2N} .
	\end{equation}
	
	\textbf{Case 2.} Assume that  $i \in G$ and $\|x_i\| \leq \dfrac{\eta (\dfrac{\varepsilon}{4N})}{2N}.$

	We choose an element  $z_i \in F  \bigl( \Upsilon _{X_i,\frac{\varepsilon}{4N}} (x_i^*) \bigr) $.  From \eqref{r1*s1-r1s1-xy} we have 
	\begin{equation}
	\label{si-zi-xi-2}
	\|s_i z_i - x_i\| \leq |s_i| + \|x_i\| < \frac{2 \varepsilon}{4N} + \frac{\varepsilon}{4N} = \frac{3\varepsilon}{4N}.
	\end{equation}
	
	\textbf{Case 3.} Assume that $i \in G^c$  and $ x_i \neq 0 .$
	
	Define $z_i = \dfrac{x_i}{\|x_i\|}.$  By  \eqref{r1*s1-r1s1-xy} we have 
	\begin{equation}
	\label{si-zi-xi-3}
	\|s_i z_i - x_i \| = \Bigl \Vert s_i \dfrac{x_i}{\|x_i\|} - x_i \Bigr \Vert  = |s_i - \|x_i\|| < \dfrac{\varepsilon}{4N}.
	\end{equation}

	\textbf{Case 4.} Assume that $i \in G^ c$  and $ x_i = 0 .$
	
	In this case we choose any element $z_i \in S_{X_i}$. In view of  \eqref{r1*s1-r1s1-xy} we have 
	\begin{equation}
	\label{si-zi-xi-4}
	\|s_i z_i - x_i\| = |s_i| < \dfrac{\varepsilon}{4N}.
	\end{equation}

	So  from \eqref{si-zi-xi-1}, \eqref{si-zi-xi-2}, \eqref{si-zi-xi-3}  and \eqref{si-zi-xi-4} we conclude  that 
	$$
	\Vert \bigl( s_i z_i \bigr) _{ i \le N }  -  \bigl( x_i \bigr) _{ i \le N }  \Vert  \le \sum _{i=1}^N \Vert s_i z_i - x_i \Vert <   \frac{3N\varepsilon}{4N} <  \varepsilon.
	$$
	Notice that 
	$$
	\bigl( s_i^*  \Upsilon _{X_i,\frac{\varepsilon}{4N}} (x_i^* ) \bigr) _{ i \le N}  \in S_{Z^*}  \sem \text{and} \sem (s_i z_i) _{ i \le N } \in S_Z .
	$$
	From \eqref{s*-s} we have 
	$$ 
	\bigl( s_i^*  \Upsilon _{X_i,\frac{\varepsilon}{4N}} (x_i^* ) \bigr) _{ i \le N}   \bigl( s_i z_i  \bigr) _{ i \le N}    =\sum _{i \in G} s_i^* s_i = 1.
	$$
	So   the proof is finished. 
\end{proof}

In \cite[Theorem 2.10]{AMS} the authors provided a stability result of AHSp  under  some infinite sums that includes $\ell_p$-sums for $1 \le p< \infty$. Here we provide  a simple  example showing  that in   such stability result  some
	requirement on the  Banach lattice sequence   used to define the infinite sum of Banach spaces   is needed.   For that example we need the following easy result.

\begin{lemma} 
	\label{le-ineq-ell1}	
	It is satisfied that  $\Bigl \Vert \Bigl( \frac{ x_n }{2^n} \Bigr) \Bigr \Vert _2 \leq  \Vert x  \Vert_1  $ for any  element $x \in \ell_1$.	
\end{lemma}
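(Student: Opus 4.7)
The plan is to chain two elementary inequalities. First, for any sequence $y = (y_n)$ of scalars the basic comparison $\|y\|_2 \le \|y\|_1$ holds, because
\[
\|y\|_2^2 = \sum_n |y_n|^2 \le \sum_n |y_n|^2 + 2\sum_{m<n}|y_m||y_n| = \Bigl(\sum_n |y_n|\Bigr)^2 = \|y\|_1^2,
\]
all cross terms being nonnegative. I would apply this to the sequence $y_n := x_n/2^n$.

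Second, since $1/2^n \le 1$ for every $n$, the monotonicity of the $\ell_1$-norm in each coordinate gives
\[
\Bigl\|\Bigl(\tfrac{x_n}{2^n}\Bigr)\Bigr\|_1 = \sum_n \frac{|x_n|}{2^n} \le \sum_n |x_n| = \|x\|_1.
\]

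Concatenating the two displayed inequalities yields the required estimate
\[
\Bigl\|\Bigl(\tfrac{x_n}{2^n}\Bigr)\Bigr\|_2 \le \Bigl\|\Bigl(\tfrac{x_n}{2^n}\Bigr)\Bigr\|_1 \le \|x\|_1.
\]
There is no real obstacle; the statement is a routine estimate whose only purpose is to streamline the infinite-product counterexample that follows.
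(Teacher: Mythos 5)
Your proof is correct, and it follows a slightly different factorization than the paper's. You chain $\bigl\Vert (x_n/2^n) \bigr\Vert_2 \le \bigl\Vert (x_n/2^n) \bigr\Vert_1 \le \Vert x \Vert_1$, using the embedding $\ell_1 \hookrightarrow \ell_2$ applied to the already-weighted sequence and then the bound $1/2^n \le 1$. The paper instead separates the sequence from the weights, estimating $\bigl\Vert (x_n/2^n) \bigr\Vert_2 \le \Vert x \Vert_\infty \, \bigl\Vert (1/2^n) \bigr\Vert_2 \le \Vert x \Vert_1 \, \bigl\Vert (1/2^n) \bigr\Vert_1 = \Vert x \Vert_1$, i.e.\ a H\"older-type bound followed by $\Vert \cdot \Vert_\infty \le \Vert \cdot \Vert_1$ and $\Vert \cdot \Vert_2 \le \Vert \cdot \Vert_1$ on the weight sequence. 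Both arguments are two-line chains of elementary norm comparisons and both exploit exactly the two facts that matter (the weights $1/2^n$ are summable with sum $1$, and $\ell_1$-norms dominate $\ell_2$-norms); yours is marginally more direct since it never detaches the weights, while the paper's version makes explicit that only $\Vert (1/2^n) \Vert_1 = 1$ is used, so it would generalize verbatim to any weight sequence of $\ell_1$-norm at most $1$. Either way the estimate is sharp enough for the counterexample it serves.
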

\begin{proof}
	If $x \in \ell_1$,  it is clear that  
	\begin{align*}
	\Bigl \Vert \Bigl( \frac{ x_n }{2^n} \Bigr) \Bigr \Vert _2	& \le  \Vert x  \Vert_\infty  	\Bigl \Vert \Bigl( \frac{1}{2^n} \Bigr) \Bigr \Vert  _2   \\
	&   \le  \Vert x  \Vert_1 	\Bigl \Vert \Bigl( \frac{1}{2^n} \Bigr) \Bigr \Vert  _1	\\
	&  =  \Vert x  \Vert_1 .
	\end{align*}
\end{proof}

We need to recall  some notions.    In order to do this we  denote by $\omega$ the space of all  real sequences.    A real Banach space $E\subset \omega$ is  {\it solid} whenever $x \in w$, $y \in E$ and $|x|
\leq |y|$ then $x \in E$ and $\|x\|_E \leq \|y\|_E$. $E$ is said to be a~\emph{Banach sequence
	lattice}   if  $E\subset \omega$, $E$ is solid and there exists $u \in
E$ with $u>0$.

Let $E$ be a~Banach sequence lattice. For a~given sequence $(X_k,
\|\ \|_{X_k})_{k=1}^{\infty}$ of Banach spaces the linear space  of sequences $x =
(x_k)$, with $x_k\in X_k$ for each $k\in \mathbb{N}$ and satisfying that  $(\|x_k\|_{ X_k})\in E$,
becomes a~Banach space endowed with the norm
\[
\|(x_k)\| = \big\|\big(\|x_k\|_{X_k}\big)\big\|_{E}.
\]
We  denote the previous   space  by $\big(\oplus \sum_{k=1}^{\infty} X_k\big)_E$.
Finally we recall that a Banach lattice  $E$ is {\it uniformly monotone}  if for each 
$\varepsilon > 0$ there is $\delta> 0$ satisfying  the following condition
$$
x \in S_E,  y \in E, \  x,y \ge 0, \ \Vert x +y \Vert \le 1 + \delta \ \Rightarrow \ \Vert y \Vert \le \varepsilon .
$$

\begin{example}
	\label{ell1-sum-without-AHSP}
	The space $E= \ell_1$, endowed with the norm $\Vert \ \  \Vert$ given by
	$$
	\Vert x \Vert =  \Vert x \Vert_1 +  \Bigl \Vert \Bigl( \frac{ x_n }{2^n} \Bigr) \Bigr \Vert _2 
	\ \ \ \  (  x \in E)
	$$	
	is a  uniformly monotone Banach lattice sequence without the AHSp and so it does not satisfy the AHp.
\end{example}
\begin{proof}
	One can easily check that  $E$ is a Banach lattice sequence and $\Vert \ \ \Vert$ is a strictly convex norm equivalent to the usual norm of $\ell_1$. 
	
	Since the norm $\Vert \ \  \Vert$ is equivalent to the usual norm of $\ell_1$, $E$ is not  reflexive and  so the norm  $\Vert \ \  \Vert$ is not uniformly convex. By 
	\cite[Proposition 3.9]{AAGM} the space $E$ does not have the AHSp and so  it cannot satisfy the AHp by 
	\cite[Proposition 2.2]{CKLM}.

	Now we show that the  Banach lattice sequence  space  $E$ is uniformly monotone. 	
	Assume $\varepsilon > 0$, $x \in S_E$ and  $y \in E$ such that $x, y \geq 0$ and $\Vert x+y \Vert \leq 1 + \dfrac{\varepsilon}{2}.$  We  will show  that $\Vert y \Vert \leq \varepsilon $, so $E$ is uniformly monotone. 
	
	Since  $x, y \geq 0,$  notice that 	$\Vert x + y  \Vert _1 = \Vert x  \Vert _1 + \Vert y \Vert _1$ and $\Bigl \Vert \Bigl( \frac{ x_n }{2^n} \Bigr) \Bigr \Vert _2 \leq  \Bigl \Vert \Bigl( \frac{ x_n + y_n }{2^n} \Bigr) \Bigr \Vert _2$, therefore 
	\begin{align*}
	1 + \Vert  y  \Vert _1 & =  \Vert x  \Vert +  \Vert  y  \Vert _1     \\
	& =  \Vert x \Vert _1 + \Bigl \Vert \Bigl( \frac{ x_n }{2^n} \Bigr) \Bigr \Vert _2 + \Vert  y  \Vert _1   \\
	&  \leq  \Vert x \Vert _1 + \Bigl \Vert \Bigl( \frac{ x_n + y_n }{2^n} \Bigr) \Bigr \Vert _2 + \Vert  y  \Vert _1   \\
	& = \Vert x + y  \Vert _1 + \Bigl \Vert \Bigl( \frac{ x_n + y_n }{2^n} \Bigr) \Bigr \Vert _2   \\
	& =  \Vert x + y \Vert   \\
	&  \leq  1 + \dfrac{\varepsilon}{2} 
	\end{align*}	
	So $\Vert y \Vert _1 \leq  \dfrac{\varepsilon}{2}  $ and by  Lemma \ref{le-ineq-ell1}  we have that $\Vert y \Vert \le  \varepsilon. $  
\end{proof}

\begin{corollary}
	There  exist a uniformly monotone Banach sequence lattice  $E$  and a family of Banach spaces  $\{X_k :  k \in \mathbb{N}\}$ satisfying AHp uniformly such that  $(\oplus \sum_{k=1}^{ \infty} X_k)_E$ does not have AHSp, so it does not have AHp.
\end{corollary}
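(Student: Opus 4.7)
The plan is to apply Example \ref{ell1-sum-without-AHSP} directly by choosing the simplest possible family of summands, namely one-dimensional spaces. Specifically, I would take $E$ to be the uniformly monotone Banach sequence lattice $\ell_1$ endowed with the equivalent norm $\Vert x \Vert = \Vert x \Vert_1 + \Vert (x_n/2^n)\Vert_2$ constructed in Example \ref{ell1-sum-without-AHSP}, and set $X_k = \mathbb{K}$ (the scalar field) for every $k \in \mathbb{N}$.

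First, I would observe that the family $\{X_k : k \in \mathbb{N}\}$ has AHp uniformly. Indeed, all the spaces $X_k$ are mutually isometric one-dimensional Banach spaces, so they all satisfy AHp with a common modulus function $\delta$ and a common $1$-norming set (any one will do, e.g.\ the whole unit sphere of the dual). In fact, AHp is trivial in one dimension because every element of the unit sphere attains its norm at every functional of norm one that is proportional to it.

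Second, I would identify $\bigl(\oplus \sum_{k=1}^{\infty} X_k\bigr)_E$ with $E$ via the obvious isometric isomorphism $(x_k) \mapsto (x_k)$, since $\Vert(x_k)\Vert_{(\oplus \sum X_k)_E} = \Vert(\Vert x_k\Vert_{X_k})\Vert_E = \Vert(|x_k|)\Vert_E = \Vert(x_k)\Vert_E$, where the last equality uses that $E$ is solid. By Example \ref{ell1-sum-without-AHSP} the space $E$ does not have AHSp, and consequently, invoking \cite[Proposition 2.2]{CKLM}, it also fails AHp. This transfers immediately to $\bigl(\oplus \sum_{k=1}^{\infty} X_k\bigr)_E$ via the isometric identification, completing the proof.

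There is essentially no obstacle here: all the heavy lifting has already been done in Example \ref{ell1-sum-without-AHSP}. The corollary simply packages the example as a counterexample to a possible extension of \cite[Theorem 2.10]{AMS}, showing that some additional requirement on the sequence lattice $E$ beyond uniform monotonicity is indispensable, even if every summand has AHp uniformly.
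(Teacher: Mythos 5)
Your proposal is correct and follows essentially the same route as the paper: take $E$ from Example \ref{ell1-sum-without-AHSP}, set every $X_k$ equal to the scalar field, identify $(\oplus\sum_{k=1}^{\infty} X_k)_E$ isometrically with $E$, and conclude via \cite[Proposition 2.2]{CKLM}. The only difference is that you spell out the trivial verifications (uniform AHp for one-dimensional spaces and the isometric identification) which the paper leaves implicit.
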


\begin{proof}
	Assume  that $E$  is  the uniformly monotone Banach sequence lattice introduced in the previous example, and we take  $X_k = \mathbb{R} $ for every positive integer $k$. So   the family $\{X_k : k \in \mathbb{N}\}$ satisfies  AHp uniformly and  	 $(\oplus \sum_{k=1}^{ \infty} X_k)_E = E$. Hence  $(\oplus \sum_{k=1}^{ \infty} X_k)_E $ does not have AHSp.  Since AHp implies AHSp by \cite[Proposition 2.2]{CKLM},   $(\oplus \sum_{k=1}^{ \infty} X_k)_E $ does not satisfy  AHp.
\end{proof}

\bibliographystyle{amsalpha}

} 

\end{document}